\documentclass{amsart}
\usepackage{amsmath}
\usepackage[toc,page]{appendix}

 \usepackage{tikz}
\usepackage{pgfplots}
\usepackage{tikz-3dplot}
\usetikzlibrary{quotes,angles,positioning}
 
\tdplotsetmaincoords{60}{115}
\pgfplotsset{compat=newest}
 \tdplotsetmaincoords{70}{65}

\usepackage{amssymb,xcolor,url}
\usepackage[many]{tcolorbox}

 \newtheorem{thm}{Theorem}[section] 
 
 \newtheorem{lem}[thm]{Lemma} 
 \newtheorem{prop}[thm]{Proposition}
 \newtheorem{cor}[thm]{Corollary}
 
 \theoremstyle{remark}
 
 \theoremstyle{remark}
 \newtheorem{exa}[thm]{Example} 
\theoremstyle{definition}
\newtheorem{defin}[thm]{Definition} 

 \title[The 4-player gambler's ruin problem]{The 4-player gambler's ruin problem}
\author{Kathryn O'Connor}
\address{Department of Mathematics, Cornell University}
\email{kfo23@cornell.edu}
\thanks{K. O'Connor's research was partially supported by NSF grant DMS-1645643}

\author{Laurent Saloff-Coste}
\address{Department of Mathematics, Cornell University}
\email{lps2@cornell.edu}
\thanks{L. Saloff-Coste's research was partially supported by NSF grant DMS-2054593}
\begin{document}

\begin{abstract}
This work explains how to utilize earlier results by P. Diaconis, K. Houston-Edwards and the second author to estimate probabilities related to the 4-player gambler ruin problem. For instance, we show that the probability that a very dominant player (i.e., a player starting with all but 3 chips distributed among the remaining players) is first to loose is of order $N^{-\alpha}$ where $\alpha$ is approximately $5.68$. In the $3$-player game, this probability is or order $N^{-3}$. We note it is futile to attempt to give heuristic/intuitive explanations for the value of $\alpha$. This value is obtained via an explicit formula relating $\alpha$ to the Dirichlet eigenvalue $\lambda$ (zero boundary condition) of the spherical Laplacian in the equilateral spherical triangle on the unit sphere $\mathbb S^2$ that corresponds to a unit simplex with one vertex placed at the origin in Euclidean $3$-space. The value of $\lambda$ is estimated using a finite-difference-type algorithm developed by Grady Wright.

\end{abstract}

\subjclass{60J10}
\keywords{Gambler's ruin, Markov chains, Perron-Frobenius eigenvalue, Perron-Frobenius eigenvector}
\maketitle

\section{introduction}  Consider the following multiplayer version of the classical gambler's ruin problem.  There are $k$ players and a fixed total amount of chips, $N$, distributed between them with player $i$ holding $x_i$ chips. At each turn, a pair $ij=\{i,j\}$, $1\le i<j\le k$, is picked uniformly at random and the chosen two players play a fair Heads-or-Tails game and exchange one chip as a result. We consider this game until the time $\tau$ when one of the $k$ players is left with no chips. 
Let $\mathbf s=(s_1,\dots,s_k)$ be the distribution of chips at the start of the game. Let $\mathbf X^n=(X^n_1,\dots,X^n_k)$
be the distribution of the chips after $n$ games, $n\le \tau$.  

For a review of this problem and the associated literature, the reader is referred to  \cite{DE21} and \cite{DHESC2}. Here we only recall that a (continuous) version of this problem is discussed in \cite{Cover1987}. The aim of this article is to complement \cite{DE21,DHESC2} with a careful discussion of the application of the results of \cite{DHESC2} to the 4-player version in a spirit similar to  the treatment of the  3-player version in \cite{DE21}.   The general  goal is to provide an informative analytic description of the large $N$ behavior of 
$$\mathbf P(\mathbf  X_\tau=\mathbf z| \mathbf X_0=\mathbf s),\; \mathbf s\in \mathfrak S_N, \; \mathbf z\in \overline{\mathfrak S}_N\setminus \mathfrak S_N,$$
uniformly over all $\mathbf s,\mathbf z$, where
$$\overline{\mathfrak S}_N=\left\{(x_i)_1^k: 0\le x_i, \sum_1^kx_i=N\right\},\;\;\mathfrak S_N=\left\{(x_i)_1^k: 0< x_i, \sum_1^kx_i=N\right\}.$$ 
Here, all points have integer coordinates. See Figure \ref{fig1} and Figure \ref{fig2}.
Note that the point of  $\overline{\mathfrak S}_N\setminus \mathfrak S_N$ which have more than one $0$ coordinates cannot actually be reached  by $(X_n)_{0\le n\le \tau}$. For this reason, it is convenient to introduce 
$$\partial \mathfrak S_N=\left\{\mathbf z=(z_1,\dots,z_k)\in \overline{\mathfrak S_N}\setminus \mathfrak S_N: \mbox{ exactly one } z_i=0, 1\le i\le k\right\}$$
and $$\mathfrak T_{N,i}=\partial \mathfrak S_N \cap \{x_i=0\}.$$

\begin{figure}
    \centering

\begin{tikzpicture}[tdplot_main_coords, scale=4]

\coordinate (O) at (0,0,0);
\coordinate (X) at (1.2,0,0);
\coordinate (Y) at (0,1.2,0);
\coordinate (Z) at (0,0,1.2);

\draw[thick,->] (0,0,0) -- (1.8,0,0) node[anchor=north east]{$x_1$};
\node at (X) [below left = 0.3mm of X] {$(N,0,0)$};
\draw[thick,->] (0,0,0) -- (0,1.7,0) node[anchor=north west]{$x_2$};
\node at (Y) [ below right = 0.3mm of Y] {$(0,N,0)$};
\draw[thick,->] (0,0,0) -- (0,0,1.6) node[anchor=south]{$x_3$};
\node at (Z) [above left = 0.3mm of Z] {$(0,0,N)$};

\draw (X) -- (Y);
\draw (Y) -- (Z);
\draw (Z) -- (X);

\fill[tdplot_main_coords, opacity=0.1] (X) -- (Y) -- (Z) -- cycle;

{
\draw[opacity=0.6] (0.2,1,0) -- (0,1,0.2);
\draw[opacity=0.6] (0.4,0.8,0) -- (0,0.8,0.4);
\draw[opacity=0.6] (0.6,0.6,0) -- (0, 0.6,0.6);
\draw[opacity=0.6] (0.8,0.4,0) -- (0,0.4,0.8);
\draw[opacity=0.6] (1,0.2,0) -- (0,0.2,1);

\draw[opacity=0.6] (0.2,1,0) -- (0.2,0,1);
\draw[opacity=0.6] (0.4,0.8,0) -- (0.4,0,0.8);
\draw[opacity=0.6] (0.6,0.6,0) -- (0.6,0,0.6);
\draw[opacity=0.6] (0.8,0.4,0) -- (0.8,0,0.4);
\draw[opacity=0.6] (1,0.2,0) -- (1,0,0.2);

\draw[opacity=0.6] (0,0.2,1) -- (0.2,0,1);
\draw[opacity=0.6] (0,0.4,0.8) -- (0.4,0,0.8);
\draw[opacity=0.6] (0, 0.6,0.6) -- (0.6,0,0.6);
\draw[opacity=0.6] (0,0.8,0.4) -- (0.8,0,0.4);
\draw[opacity=0.6] (0,1,0.2) -- (1,0,0.2);
}

\end{tikzpicture}

\caption{The triangle for the 3-player game in the plane $\sum_1^3x_i=N$. Here $N=6$.}
    \label{fig0}
\end{figure}
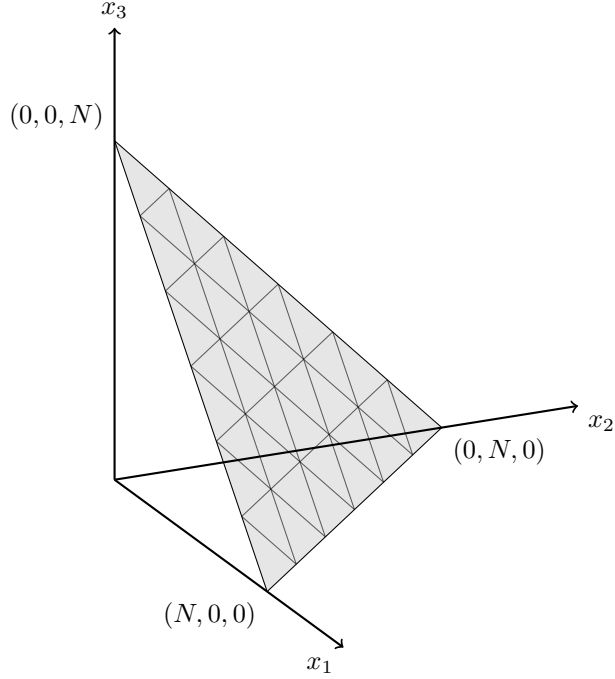

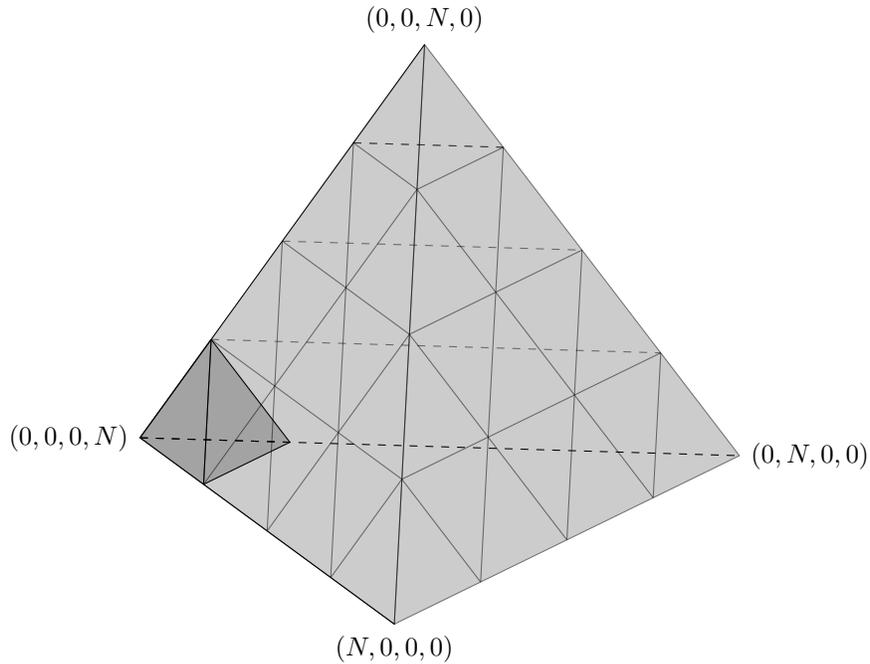
\begin{figure}
    \centering
\begin{tikzpicture}[tdplot_main_coords, scale=2]

\coordinate (O) at (0,0,0);
\coordinate (A) at (1,0,0);
\coordinate (B) at (0.5, 0.866025, 0);
\coordinate (C) at (0.5, 0.288675, 0.8164966);

\coordinate (A2) at (4, 0,0);
\node at (A2) [below = 0.3mm of A2] {$(N,0,0,0)$};
\coordinate (B2) at (2, 4*0.866025 , 0);
\node at (B2) [ right = 0.3mm of B2] {$(0,N,0,0)$};
\coordinate (C2) at (2, 4*0.288675 , 4* 0.8164966 );
\node at (C2) [above = 0.3mm of C2] {$(0,0,N,0)$};

\node at (O) [left = 0.3mm of O] {$(0,0,0,N)$};


\draw[dashed] (O) -- (B);
\draw (0,0,0) -- (C);
\draw (A) -- (B);
\draw (B) -- (C);
\draw (A) -- (C);

\draw[dashed] (O) -- (4*0.5, 4*0.866025 , 0);
\draw (O) -- (4*0.5, 4*0.288675 , 4* 0.8164966 );
\draw (O) -- (4*1, 0,0);

\fill[opacity=0.2] (O) -- (A) -- (B) -- (C) -- cycle;
\fill[opacity=0.2] (O) -- (A2) -- (B2) -- (C2) -- cycle;


\foreach \x in {0.25, 0.5, 0.75, 1}
\foreach \y in {4}
{\draw[opacity=0.5] (\y - \y*\x + \y*\x*0.5,  \y*\x*0.866025, 0) 
-- (\y - \y*\x  + \y*\x*0.5,  \y*\x*0.288675, \y*\x*0.8164966 ); 
\draw[opacity=0.5] (\y*0.5 - \y*\x*0.5 + \y*\x ,  \y*0.866025 - \y*\x*0.866025, 0  ) 
-- (\y*0.5 - \y*\x*0.5 + \y*\x*0.5,  \y*0.866025 - \y*\x*0.866025 + \y*\x*0.288675 , \y*\x* 0.8164966  ); 
\draw[opacity=0.5] (\y*0.5 - \y*\x*0.5 + \y*\x ,  \y*0.288675 - \y*\x*0.288675 , \y*0.8164966 - \y*\x*0.8164966 ) 
-- (\y*0.5 - \y*\x*0.5 + \y*\x*0.5 ,  \y*0.288675 - \y*\x*0.288675 + \y*\x*0.866025 , \y* 0.8164966 - \y*\x* 0.8164966 ); 
}

\foreach \x in {1,2,3,4}
{
\draw[opacity=0.5] (\x,0,0) -- (\x + 4*0.5 - \x*0.5, 4*0.288675 - \x*0.288675, 4*0.8164966 - \x*0.8164966);
\draw[opacity=0.5] (4*0.5 - \x * 0.5, 4*0.288675 - \x * 0.288675, 4*0.8164966 - \x*0.8164966) -- (\x + 4*0.5 - \x*0.5, 4*0.288675 - \x*0.288675, 4*0.8164966 - \x*0.8164966);
\draw[opacity=0.5] (\x,0,0) -- (\x * 0.5, \x * 0.288675,  \x*0.8164966);
}

\foreach \x in {1,2,3}
{
\draw[dashed, opacity=0.5] (\x*0.5 , \x*0.288675  , \x* 0.8164966 ) -- (4* 0.5 ,4* 0.866025 - \x*0.866025 + \x*0.288675 , \x*0.8164966);
}

\coordinate (topO) at (3*0.5, 3*0.288675, 3*0.8164966);
\coordinate (topA) at (1+3*0.5, 3*0.288675, 3*0.8164966);
\coordinate (topB) at (0.5 + 3*0.5, 0.866025 + 3*0.288675, 3*0.8164966);

\draw[dashed, opacity=0.5] (topO) -- (topB);


\end{tikzpicture}
  
    \caption{The 3-dimensional 4-player tetrahedron (N=4). The picture shows only the vertices that are on the two visible front faces and dashed horizontal lines on the back face. In 4-space, the left-most corner
    hangs from the $x_4$-axis and the right-most face lies in the $\{x_4=0\}\cap \{\sum_1^4x_i=N\}$ plane. The entire tetrahedron lies in the $\{\sum_1^4 x_i=N\}$ 3-space. Compare to the 3-player game picture shown in Figure \ref{fig0}.}
    \label{fig1}
\end{figure}
 
Among the particular questions one can ask, one test question identified in \cite{DE21} concerns the behavior of
$$\mathbf P\left(\mathbf  X_\tau\in \{\mathbf z: z_k=0\}  | \mathbf X_0=\mathbf s\right) \mbox{ and }
\mathbf P\left(\mathbf  X_\tau =\mathbf z  | \mathbf X_0=\mathbf s \mbox{ and } \mathbf  X_\tau\in \{\mathbf z: z_k=0\} \right)
$$when $\mathbf s=(s_i)_1^k$ satisfies $s_k=N-k+1$.  In words, what is the probability that a very dominant player at time $0$ (i.e., a player with  $N-k+1$ chips) be the first to loose all their chips? Given that the very dominant player at time $0$ is the first to loose, what is the probability distribution describing the allocation of chips among the remaining players at the time the former dominant player looses?

\begin{thm} \label{th1}Referring to the 4-player game with player $4$ being the very dominant player at the start of the game ($s_4=N-3, s_1,s_2,s_3=1$, $\mathbf s^*_N=(1,1,1,N-3)$), 
there exists $\alpha>0$ (the approximate value of $\alpha$ is $\alpha\approx 5.68$)such that
$$\mathbf P(\mathbf  X_\tau\in \mathfrak T_{N,4}  | \mathbf X_0=\mathbf s_N^*) \asymp N^{-\alpha}$$
and, for $\mathbf z=(z_1,z_2,z_3,0)\in \mathfrak T_{N,4}$ and  $\beta  =\pi/\arccos (1/3)\approx 2.55$,
$$\mathbf P(\mathbf  X_\tau=\mathbf z  | \mathbf X_0=\mathbf s \mbox{ and } \mathbf  X_\tau\in \mathfrak T_{N,4})
\asymp \frac{z_1z_2z_3[(z_1+z_2)(z_1+z_3)(z_2+z_3)]^{\beta-2}}{N^{3\beta-1}}  .$$
\end{thm}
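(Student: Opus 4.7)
The strategy is to invoke the continuum framework of \cite{DHESC2}, the four-player analogue of the analysis carried out for three players in \cite{DE21}, which reduces the exit-distribution estimates for the discrete walk to the corresponding Brownian-motion exit problem on the unit tetrahedron $T = \{\mathbf x \in \mathbb R^4_{\geq 0} : \sum_i x_i = 1\}$. Under the rescaling $\mathbf x \mapsto \mathbf x/N$, the starting configuration $\mathbf s_N^*/N$ sits at Euclidean distance $\Theta(1/N)$ from the vertex $V_4 = e_4$ of $T$, and the discrete-to-continuum corrections are controlled down to this scale.

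For the first assertion, we exploit the fact that near $V_4$ the tetrahedron is locally a 3-dimensional cone whose spherical cross-section $\Omega \subset \mathbb S^2$ is the equilateral spherical triangle with sides of angular length $\pi/3$ and interior angles $\arccos(1/3)$. Separation of variables produces the unique (up to scalar) positive harmonic function $h(r\omega) = r^\alpha \phi_1(\omega)$ on this cone that vanishes on the three faces of $T$ meeting at $V_4$, where $\phi_1 > 0$ is the Dirichlet ground state of $-\Delta_{\mathbb S^2}$ on $\Omega$ with eigenvalue $\lambda$, and $\alpha = -\tfrac12 + \sqrt{\tfrac14 + \lambda}$. Standard harmonic-measure arguments then give that the probability of Brownian motion, started at distance $\Theta(1/N)$ from $V_4$, of first reaching the far face $\mathfrak T_{N,4}$ is of order $h(\mathbf s_N^*/N) \asymp N^{-\alpha}$. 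The numerical value $\alpha \approx 5.68$ follows from Grady Wright's finite-difference estimate of $\lambda$ mentioned in the abstract.

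For the conditional distribution on $\mathfrak T_{N,4}$, uniqueness of $h$ forces the dependence on the precise starting point to factor out, leaving a limiting density $P$ on the triangular face $F = \{\mathbf z \in T : z_4 = 0\}$ to be determined by local asymptotic analysis of the Poisson kernel along the singular strata of $F$. First, along each open edge of $F$ (where exactly one of $z_1, z_2, z_3$ vanishes), the face $F$ meets a neighboring face of $T$ smoothly, and standard boundary theory forces $P$ to vanish linearly in that coordinate; combined over the three edges this contributes the factor $z_1 z_2 z_3$. Next, at each vertex of $F$, say $V_3$ (where $z_1 = z_2 = 0$), the local 3-dimensional geometry is a cone whose three faces meet pairwise along edges with common dihedral angle $\arccos(1/3)$. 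The harmonic function governing exit to $F$ vanishes on the two faces adjacent to the edge $V_3 V_4$, and a separation-of-variables analysis in the transverse 2-dimensional wedge (with exit exponent $\pi/\arccos(1/3) = \beta$) produces the factor $(z_1 + z_2)^{\beta - 2}$ after extracting the linear edge vanishings already accounted for. Multiplying the contributions from the three vertices of $F$ and normalizing (the face has area of order $N^2$ with typical coordinate scale $N$, so the un-normalized integrand has size of order $N^{3\beta-1}$) yields the density in the statement.

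The main technical obstacle is twofold. On one hand, one must transfer the continuum asymptotic expansions to the discrete walk uniformly over all of $\mathfrak T_{N,4}$, including the sensitive vertex- and edge-neighborhoods where standard estimates degenerate. On the other hand, one must rigorously justify the clean product form of $P$ by matching the local asymptotic expansions at the three edges and three vertices of $F$ simultaneously, ruling out contributions from subleading eigenmodes at each singular stratum. Both tasks lie within the scope of the machinery developed in \cite{DHESC2}; the inputs genuinely specific to the 4-player problem are the Dirichlet eigenvalue $\lambda$ of the equilateral spherical triangle (estimated numerically) and the explicit dihedral angle $\arccos(1/3)$ of the regular tetrahedron.
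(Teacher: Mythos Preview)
Your geometric analysis is correct and you identify all the right ingredients (the cone at $V_4$, the Dirichlet eigenvalue on the equilateral spherical triangle, the dihedral angle $\arccos(1/3)$ and the wedge exponent $\beta$), but the route differs from the paper's in an important organizational way. The paper does \emph{not} reduce the discrete walk directly to a Brownian exit problem, and the machinery of \cite{DHESC2} is not a continuum framework. Its central object is the discrete Perron--Frobenius eigenfunction $\phi_0$ of the killed kernel $K$ on $\mathfrak S_N$: the key input from \cite{DHESC2} is the two-sided bound (Theorem~\ref{th2} here)
\[
\mathbf P(\mathbf X_\tau = \mathbf z \mid \mathbf X_0 = \mathbf s) \asymp N^2\,\phi_0(\mathbf s)\,\phi_0(\mathbf y_{\mathbf z})
\]
whenever $d(\mathbf s,\mathbf z)\asymp N$, which reduces both the $N^{-\alpha}$ estimate and the conditional density on $\mathfrak T_{N,4}$ to estimating $\phi_0$ near the boundary. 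The continuous cone profile $h(r\theta)=r^{\alpha}\psi(\theta)$ enters only as a device for estimating $\phi_0$: a boundary Harnack inequality (Theorem~\ref{thH}) compares $\phi_0$ to the lattice-cone profile $h_{\mathfrak L}$ in each corner of the simplex, and the results of \cite{Var,DW2} compare $h_{\mathfrak L}$ to $h$. Your local analysis of the face $F$ (linear vanishing along edges, power $(z_i+z_j)^{\beta-2}$ at vertices) is precisely the computation carried out in Proposition~\ref{prop-h}, but there it serves to estimate $\phi_0(\mathbf y_{\mathbf z})$ via the cone at a vertex of $F$, not to expand a Poisson kernel on $F$.

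What your harmonic-measure framing buys is a cleaner potential-theoretic picture. What the paper's $\phi_0$-route buys is that a single discrete object governs both assertions simultaneously, and the ``matching of local expansions at each singular stratum'' that you flag as the main obstacle is handled in one stroke by the boundary Harnack principle rather than by separate asymptotic matchings.
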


For simplicity, Theorem \ref{th1} addresses only a very extreme situation and it is desirable to treat other cases as well as will be done later in Section \ref{sec-Appli}. The given value of $\alpha$ is from a numerical computation which will be briefly discussed at the end of the paper. Note that there is no simple heuristic allowing one to guess correctly the behavior of 
$$\mathbf P(\mathbf  X_\tau\in \mathfrak T_{N,4}  | \mathbf X_0=\mathbf s_N^*).$$

{\bf Acknowledgements} We thank Grady Wright from Boise State University for providing an algorithm that computes the approximate value of $\alpha$ given in the above theorem (more on that later). We thank Alex Townsend for paying attention to our questions and connecting us with Grady. And we thank Persi Diaconis for encouraging us in our efforts.

\section{Key estimates in terms of the Perron-Frobenius eigenfunction}
\subsection{Notation} Throughout, we always assume that the integers $N,k$ are such that $3\le k\le N$. In fact, we always assume that $k$ is fixed and we are interested in what happens for that fixed $k$ when $N$ is large.
The process $(\mathbf X^n)_{n\ge 0}$ is a Markov chain taking place in the simplex $$\overline{\mathfrak S}_N=\left\{(x_i)_1^k\in \mathbb Z^k: 0\le x_i, \sum_1^kx_i=N\right\},$$ started at $\mathbf s$ in
$$\mathfrak S_N=\left\{(x_i)_1^k\in \mathbb Z^k: 0< x_i, \sum_1^kx_i=N\right\}$$ and stopped at the random time $\tau$ when it hits the boundary
$\partial \mathfrak S_N\subset \overline{\mathfrak S}_N\setminus \mathfrak S_N$ defined earlier. This Markov process is based on the global Markov kernel
$$\widetilde{K}(\mathbf x,\mathbf y)=\left\{\begin{array}{cl}\frac{1}{2{k\choose 2}} &\mbox{ if }   \mathbf y -\mathbf x\in\{\pm( e_i-e_j): 1\le i<j\le k\}\\
0& \mbox{ otherwise,} \end{array} \right.$$
in the hyperplane $\left\{\mathbf x=(x_i)_1^k: \sum_1^nx_i=N\right\}$, and its killed version, $K=\widetilde{K}_{\mathfrak S_N}$ defined by
\begin{equation}\label{defK}K(\mathbf x,\mathbf y)=\widetilde{K}(\mathbf x,\mathbf y)\mathbf 1_{\mathfrak S_N}(\mathbf x)\mathbf 1_{\mathfrak S_N}(\mathbf y).\end{equation}
This kernel defines an operator (by abuse of notation, we call it $K$) acting on functions on the finite set $\mathfrak S_N$ by
$$K\phi(\mathbf x)=\sum_{\mathbf y\in \mathfrak S_N} K(\mathbf x,\mathbf y)\phi(\mathbf y).$$
We consider this operator acting on the Hilbert space $L^2(\mathfrak S_N)$ equipped with
$$\langle \phi,\psi\rangle=\sum_{\mathbf x\in\mathfrak S_N} \phi(\mathbf x)\psi(\mathbf x),\;\;\; \|\phi\|_2^2=\langle \phi,\phi\rangle.$$

The sub-Markovian kernel $K$ is irreducible in $\mathfrak S_N$ in the sense that there exists $m$ such that $K^m(\mathbf x,\mathbf y)>0$ for all $\mathbf x,\mathbf y\in \mathfrak S_N$. It  thus follows from the Perron-Frobenius theorem that there is a unique $\phi_0:\mathfrak S_N \to (0,+\infty)$ such that
\begin{equation}\label{defphi0} K\phi_0=\beta_0\phi_0
\;\;\|\phi_0\|_2=1,\;\mbox{ and } \beta_0=\sup\{ \|K\phi\|_2: \|\phi\|_2=1\}.\end{equation}

Note that, at the exit time $\tau$, the process $(X_n)_{0\le n\le \tau}$ started at 
$X_0\in \mathfrak S_N$, is at point in $\partial \mathfrak S_N$ for the first time.  For any point $\mathbf z\in \partial \mathfrak S_N$, let $\mathcal N_\mathbf z$ be the non-empty finite set of those points in $\mathfrak S_N$
which are neighbors of $z$ in the sense that $\widetilde{K}(\mathbf y,\mathbf z)>0$
if $\mathbf y\in \mathcal N_\mathbf z$. Namely,
$$\mathcal N_\mathbf z=\{\mathbf y\in \mathfrak S_N: \widetilde{K}(\mathbf y,\mathbf z)>0\}, \;\;\mathbf z\in \partial \mathfrak S_N.$$
We will often use the notation $\mathbf y_z$ to denote an arbitrary point in $\mathcal N_\mathbf z$.

\subsection{Estimates in terms of $\phi_0$}

According to \cite{DHESC1,DHESC2},what is  needed to answer questions such as the ones considered in Theorem \ref{th1}  is estimates describing the boundary behavior of the Perron-Frobenius function $\phi_0$.
Indeed, for $k\ge 3$, one can apply \cite[(6.15) and Theorems 6.5-6.16]{DHESC2} and these results give the following very general estimates (by ``very general,'' we mean that these estimates holds in much greater generality than the present  setting of the $k$-player game. See \cite{DHESC1,DHESC2}.)
In all these estimates, the dimension $k$ is fixed and all implied constants may depend on $k$. The key point is that these estimates are uniform in $N$.  The first Lemma gives basic size estimate for $\phi_0$ in the middle of $\mathfrak S_N$ and captures the fact that $\phi_0$ behaves in a very tame fashion in the middle of $\mathfrak S_N$. This is the result of a basic Harnack type inequality that applies to $\phi_0$. In these statements, we can choose to use the Euclidean distance to compute distance between points, or the graph distance in the lattice supporting $\mathfrak S_N$ in the hyperplane $\{\sum x_i=N\}$. The various implied constants will then have to be adjusted depending of the choice, and these adjustments depend on the dimension $k$. 

\begin{lem} \label{lem-phi1}
 The normalized Perron-Frobenius eigenvalue $ \beta_0$ and eigenfunction $\phi_0$ satisfy
 $$1-\beta_0\asymp N^{-2}$$
 and, with $\mathbf o_N$ denoting any one of the  points in $\mathfrak S_N$ closest to the middle point $(N/k,\cdots,N/k)$, 
 $$\phi_0(\mathbf o_N)\asymp N^{-(k-1)/2},\;\; \sum_{\mathbf s\in \mathfrak S_N} \phi_0(\mathbf s)\asymp N^{(k-1)/2}.$$
 Moreover, for any $a\in (0,1)$ there is a constant $A$ for which
 $$A^{-1}\phi(\mathbf o_N)\le \phi_0(\mathbf x)\le A\phi_0(\mathbf o_N) $$
 for all $\mathbf x\in \mathfrak S_N$ such that $d(\mathbf x,\partial \mathfrak S_N)\ge aN$.
 \end{lem}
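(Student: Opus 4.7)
The plan is to handle the three assertions in the natural logical order: spectral gap first, then the interior Harnack-type bound, and finally the size estimates for $\phi_0(\mathbf o_N)$ and $\sum\phi_0$, each building on the previous.

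For $1-\beta_0\asymp N^{-2}$, I would use the variational characterization $1-\beta_0 = \inf_\phi \langle (I-K)\phi,\phi\rangle/\|\phi\|_2^2$. The upper bound $1-\beta_0\lesssim N^{-2}$ is obtained by testing against the lattice restriction of the first Dirichlet eigenfunction $\Phi$ of the continuous Laplace--Beltrami operator on the rescaled continuous simplex. A short second-order Taylor expansion shows that the discrete Dirichlet form evaluated on this test function is within a constant factor of $\int|\nabla\Phi|^2$, which by scaling is $\asymp N^{-2}$. The matching lower bound $1-\beta_0\gtrsim N^{-2}$ is a Dirichlet--Poincar\'e inequality for $K$ on $\mathfrak S_N$, available in the general framework of \cite{DHESC1,DHESC2} and reflecting that $\mathfrak S_N$ has diameter of order $N$.

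For the ``moreover'' statement (the interior two-sided bound), on the set $\{\mathbf x: d(\mathbf x,\partial\mathfrak S_N)\ge aN/2\}$ the kernels $K$ and $\widetilde K$ coincide, so $\phi_0$ satisfies $\widetilde K\phi_0 = \beta_0\phi_0$ with $1-\beta_0 = O(N^{-2})$. I would invoke the elliptic Harnack inequality for such eigenfunctions of $\widetilde K$, available from the general theory in \cite{DHESC1,DHESC2}, on Euclidean balls of radius comparable to $aN/4$ whose neighborhood lies in the interior set $\{d(\cdot,\partial\mathfrak S_N)\ge aN/4\}$. Iterating along a chain of overlapping such balls joining any two points at distance $\ge aN$ from the boundary (a chain of length depending only on $a$ and $k$) then yields $\phi_0(\mathbf x)\asymp\phi_0(\mathbf y)$ for all such pairs, and in particular the stated bound relative to $\phi_0(\mathbf o_N)$.

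The size estimates follow by combining this Harnack bound with the $L^2$-normalization. The set $\mathcal I_N = \{\mathbf x:d(\mathbf x,\partial\mathfrak S_N)\ge N/(4k)\}$ has cardinality $\asymp N^{k-1}$ and $\phi_0\asymp\phi_0(\mathbf o_N)$ on it, so $\|\phi_0\|_2^2 \ge |\mathcal I_N|\cdot c\,\phi_0(\mathbf o_N)^2 \asymp N^{k-1}\phi_0(\mathbf o_N)^2$, giving $\phi_0(\mathbf o_N)\lesssim N^{-(k-1)/2}$. The reverse inequality, and both inequalities for $\sum\phi_0$, require that the $\ell^2$- and $\ell^1$-masses of $\phi_0$ are not concentrated in a thin boundary layer; this follows from a polynomial boundary decay estimate of the form $\phi_0(\mathbf x)\lesssim \phi_0(\mathbf o_N)\bigl(d(\mathbf x,\partial\mathfrak S_N)/N\bigr)^\gamma$ with some $\gamma>0$, itself a consequence of the boundary Harnack principle in \cite{DHESC2}. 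Summing over dyadic boundary shells then produces both $\phi_0(\mathbf o_N)\asymp N^{-(k-1)/2}$ and $\sum_{\mathbf s}\phi_0(\mathbf s)\asymp N^{(k-1)/2}$. The main obstacle in the whole argument is precisely this boundary decay input: the simplex $\mathfrak S_N$ has a stratified, non-smooth boundary with low-dimensional faces (edges and vertices when $k=4$), and securing a polynomial decay rate uniform in $N$ and valid near every stratum is exactly where the delicate geometric machinery of \cite{DHESC1,DHESC2} has to be brought to bear.
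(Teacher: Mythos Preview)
Your sketch is sound, but note that the paper does not actually prove this lemma: it is stated as a direct import from \cite{DHESC1,DHESC2}, with the sentence preceding the lemma (``This is the result of a basic Harnack type inequality that applies to $\phi_0$'') and the references to \cite[Theorem 6.6]{DHESC2}, \cite[Theorem 8.9]{DHESC1}, and \cite[(5.16)]{DHESC2} serving as the entire justification. Your outline is essentially a reconstruction of what those cited results contain, and the Harnack-chaining plus $L^2$-normalization argument you describe is exactly the mechanism behind them.

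One small simplification worth noting: for the upper bound on $\sum_{\mathbf s}\phi_0(\mathbf s)$ you do not need the boundary decay input at all, since Cauchy--Schwarz gives $\sum_{\mathbf s}\phi_0(\mathbf s)\le \|\phi_0\|_2\,|\mathfrak S_N|^{1/2}\asymp N^{(k-1)/2}$ directly. The boundary decay (equivalently, $\|\phi_0\|_\infty\asymp\phi_0(\mathbf o_N)$, which the paper mentions just after the lemma) is genuinely needed only for the lower bound $\phi_0(\mathbf o_N)\gtrsim N^{-(k-1)/2}$, and you correctly identify this as the one place where the stratified-boundary machinery of \cite{DHESC1,DHESC2} is doing real work.
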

The estimate of $\phi_0$ given in this lemma is for points that are away from the boundary $\partial \mathfrak S_N$. In that region, $\phi_0$ behave like a constant. Even though this lemma does not capture this fact, it is also true that $\phi_0(\mathbf o_N)\asymp \|\phi_0\|_
\infty$. See, e.g., \cite[Theorem 6.6]{DHESC2} and \cite[Theorem 8.9]{DHESC1}

\begin{thm}[{\cite[(6.22)]{DHESC2}}] \label{th2}Fix $k\ge 3$. Referring to the k-player game,
consider arbitrary points $\mathbf s\in \mathfrak S_N$ and $\mathbf z\in \partial \mathfrak S_N$. We have
$$\mathbf P(\mathbf  X_\tau =\mathbf z  | \mathbf X_0=\mathbf s) \asymp \phi_0(\mathbf s) \phi_0(\mathbf y_z) \left(N^2
+ \frac{1}{\phi_0(\mathbf s_d)^2d^{k-3}}\right)$$
where $\mathbf y_\mathbf z\in \mathcal N_\mathbf z$, $d=d(\mathbf s,\mathbf z)$ and $\mathbf s_d$ is any point in $\mathfrak S_N$ such that $$d(\mathbf s,\mathbf s_d)\le C_k d \mbox{ and  } d(\mathbf s_d,\partial \mathfrak S_N)\ge c_k d$$ for some appropriately chosen constants $0< c_k\le C_k<+\infty$.
\end{thm}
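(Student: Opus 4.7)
The first step is to rewrite the hitting probability via the Green's function of $K$:
$$\mathbf P(\mathbf X_\tau=\mathbf z\mid \mathbf X_0=\mathbf s)=\sum_{\mathbf y_\mathbf z\in \mathcal N_\mathbf z}G(\mathbf s,\mathbf y_\mathbf z)\,\widetilde K(\mathbf y_\mathbf z,\mathbf z),\qquad G(\mathbf s,\mathbf y)=\sum_{n\ge 0} K^n(\mathbf s,\mathbf y).$$
Since $\widetilde K(\mathbf y_\mathbf z,\mathbf z)\asymp 1$ and $|\mathcal N_\mathbf z|$ is bounded in terms of $k$ alone, it suffices to obtain two-sided bounds for $G(\mathbf s,\mathbf y_\mathbf z)$. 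I would split $G$ at the spectral time scale $n_0\asymp N^2$ coming from the gap $1-\beta_0\asymp N^{-2}$ of Lemma \ref{lem-phi1}, writing $G=G_{\text{short}}+G_{\text{long}}$.

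For $G_{\text{long}}$, perform the Doob $\phi_0$-transform. The operator $\bar K(\mathbf x,\mathbf y)=\phi_0(\mathbf y)K(\mathbf x,\mathbf y)/(\beta_0\phi_0(\mathbf x))$ is a reversible Markov kernel on $\mathfrak S_N$ with invariant probability $\pi=\phi_0^2$ (since $\|\phi_0\|_2=1$), and one has the identity $K^n(\mathbf s,\mathbf y)=\beta_0^n\phi_0(\mathbf s)\bar K^n(\mathbf s,\mathbf y)/\phi_0(\mathbf y)$. A quantitative mixing estimate driven by the spectral gap gives $\bar K^n(\mathbf s,\mathbf y)\asymp \phi_0(\mathbf y)^2$ once $n\ge n_0$, hence $K^n(\mathbf s,\mathbf y)\asymp \beta_0^n\phi_0(\mathbf s)\phi_0(\mathbf y)$ on that range; summing the geometric tail yields $G_{\text{long}}(\mathbf s,\mathbf y_\mathbf z)\asymp N^2\phi_0(\mathbf s)\phi_0(\mathbf y_\mathbf z)$, which produces the $N^2$ term inside the parentheses.

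For $G_{\text{short}}$, the comparison point $\mathbf s_d$ is introduced because the short-time trajectories contributing to $G_{\text{short}}$ can be confined to a ball of radius $\asymp d$ sitting in the interior of $\mathfrak S_N$, where $\phi_0\asymp \phi_0(\mathbf s_d)$ and $\pi$-volumes of such balls are $\asymp \phi_0(\mathbf s_d)^2 n^{(k-1)/2}$. The standard Gaussian bound on weighted graphs, combined with a boundary Harnack application near $\mathbf y_\mathbf z$ to recover the factor $\phi_0(\mathbf y_\mathbf z)^2$ coming from $\pi(\mathbf y_\mathbf z)$, gives
$$\bar K^n(\mathbf s,\mathbf y_\mathbf z)\asymp \frac{\phi_0(\mathbf y_\mathbf z)^2}{\phi_0(\mathbf s_d)^2\,n^{(k-1)/2}}\,e^{-cd^2/n},\qquad n\le n_0.$$
Translating back through the Doob identity and summing, using $\sum_{n\le n_0} n^{-(k-1)/2}e^{-cd^2/n}\asymp d^{-(k-3)}$ when $k\ge 4$, one obtains
$$G_{\text{short}}(\mathbf s,\mathbf y_\mathbf z)\asymp \frac{\phi_0(\mathbf s)\phi_0(\mathbf y_\mathbf z)}{\phi_0(\mathbf s_d)^2\,d^{k-3}},$$
which is the second term. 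Adding $G_{\text{short}}$ and $G_{\text{long}}$ gives the claimed estimate.

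The main obstacle is the short-time heat-kernel estimate with the correct $\phi_0$ prefactors at both endpoints, uniformly in $N$: this is precisely what forces the introduction of $\mathbf s_d$ and the combined use of a parabolic Harnack inequality on interior cylinders, a boundary Harnack principle for $\phi_0$ near every stratum of $\partial \mathfrak S_N$, and a chaining argument linking $\mathbf s$ to $\mathbf y_\mathbf z$ through $\mathbf s_d$. These ingredients are what is developed in the general framework of \cite{DHESC1,DHESC2}.
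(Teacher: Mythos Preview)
The paper does not give a proof of this statement: as the theorem header indicates, it is quoted verbatim from \cite[(6.22)]{DHESC2} and used as a black box. There is therefore nothing in the present paper to compare your argument against.

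That said, your sketch is an accurate outline of the strategy actually carried out in \cite{DHESC1,DHESC2}. The last-exit representation through the killed Green function, the $\phi_0$-Doob transform, the split at the spectral time $n_0\asymp N^2$ (with the geometric tail producing the $N^2$ term), and the two-sided Gaussian heat-kernel bounds for the Doob-transformed chain (a reversible Harnack chain with measure $\phi_0^2$) are exactly the ingredients used there. Your reading of $\mathbf s_d$ as the device that records the $\phi_0^2$-volume of the scale-$d$ ball in the Doob geometry is the right one. One minor point worth flagging: your evaluation $\sum_{n\le n_0}n^{-(k-1)/2}e^{-cd^2/n}\asymp d^{-(k-3)}$ is clean only for $k\ge 4$; when $k=3$ that sum carries a logarithm $\log(N/d)$, and matching this with the stated formula (where $d^{k-3}=1$) requires the finer volume bookkeeping done in \cite{DHESC2} rather than the simplified interior approximation you wrote down.
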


Given Lemma \ref{lem-phi1}, Theorem \ref{th1} gives
\begin{equation} \label{P1}
  \mathbf P(\mathbf  X_\tau =\mathbf z  | \mathbf X_0=\mathbf s) \asymp_{\epsilon,k} N^2\phi_0(\mathbf s) \phi_0(\mathbf y_z) 
\end{equation}
for each fixed $\epsilon\in (0,1)$ and  all  points $\mathbf s\in \mathfrak S_N$ and $\mathbf z\in \partial \mathfrak S_N$ with $d(\mathbf s,\mathbf z)> \epsilon N$. This is because
$d=d(\mathbf s,\mathbf z)\asymp N$ and thus
$\phi_0(\mathbf x_d)^2d^{k-3}\asymp N^{-2}$.

Estimate (\ref{P1}) is what is needed to obtain Theorem \ref{th1} because, in that theorem, the starting point
$\mathbf s_N=(1,1,1,N-3)$ is at distance of order $N$ of the face $\mathfrak T_{N,4}=\partial \mathfrak S_N\cap \{z_4=0\}$. See, e.g., Figures \ref{fig1} and \ref{fig2}.

In all other cases, that is, whenever $d(\mathbf s,\mathbf z)\le \epsilon N$, we have
\begin{equation}\label{P2}
  \mathbf P(\mathbf  X_\tau =\mathbf z  | \mathbf X_0=\mathbf s) \asymp_{\epsilon,k} \frac{\phi_0(\mathbf s) \phi_0(\mathbf y_z)}{\phi_0(\mathbf s_d)^2d^{k-3}} ,\;\;d=d(\mathbf s,\mathbf z).
\end{equation}

The results in \cite{DHESC2} also provides  detailed information in the same spirit  for hitting probabilities under the additional requirement that $\tau<t$.
This will be discussed in a later section.

\section{Estimating $\phi_0$}
Theorem \ref{th2} makes it clear that having a detailed understanding of $\phi_0$ is what is needed to obtain good hitting probability estimates. We will give explicit estimates for the case $k=3,4$ and $5$.  In the case $k=3$,
\cite{DHESC2} gives an explicit exact formula for $\phi_0$ as well as the two sided estimate (\cite[(5.20)]{DHESC2})
$$\phi_0((s_1,s_2,s_3))
\asymp  N^{-7}(s_1+s_2)(s_1+s_3)(s_2+s_3)s_1s_2s_3.$$
Unfortunately, in the case $k>3$, there is no reasons to expect exact formulas expressible in simple terms and we need to rely on a much more sophisticated analysis.

\begin{figure}
 \centering
\begin{tikzpicture}[tdplot_main_coords, scale=2]



\coordinate (O) at (0,0,0);
\coordinate (A) at (0.4,0,0);
\coordinate (B) at (0.4*0.5, 0.4*0.866025, 0);
\coordinate (C) at (0.4*0.5, 0.4*0.288675, 0.4*0.8164966);

\coordinate (A2) at (4, 0,0);
\coordinate (B2) at (2, 4*0.866025 , 0);
\coordinate (C2) at (2, 4*0.288675 , 4* 0.8164966 );

\fill[opacity=0.2] (O) -- (A) -- (B) -- (C) -- cycle;
\draw[dashed] (O) -- (B);
\draw (0,0,0) -- (C);
\draw (A) -- (B);
\draw (B) -- (C);
\draw (A) -- (C);


\draw[dashed] (O) -- (2.5, 5*0.866025 , 0);
\draw (O) -- (2.5, 5*0.288675 , 5* 0.8164966 );
\draw (O) -- (5, 0,0);

\fill[opacity=0.2] (O) -- (A2) -- (B2) -- (C2) -- cycle;


\foreach \x in {0.1, 0.2, 0.3, 0.4, 0.5, 0.6, 0.7, 0.8, 0.9, 1}
\foreach \y in {4}
{\draw[opacity=0.5] (\y - \y*\x + \y*\x*0.5,  \y*\x*0.866025, 0) 
-- (\y - \y*\x  + \y*\x*0.5,  \y*\x*0.288675, \y*\x*0.8164966 ); 
\draw[opacity=0.5] (\y*0.5 - \y*\x*0.5 + \y*\x ,  \y*0.866025 - \y*\x*0.866025, 0  ) 
-- (\y*0.5 - \y*\x*0.5 + \y*\x*0.5,  \y*0.866025 - \y*\x*0.866025 + \y*\x*0.288675 , \y*\x* 0.8164966  ); 
\draw[opacity=0.5] (\y*0.5 - \y*\x*0.5 + \y*\x ,  \y*0.288675 - \y*\x*0.288675 , \y*0.8164966 - \y*\x*0.8164966 ) 
-- (\y*0.5 - \y*\x*0.5 + \y*\x*0.5 ,  \y*0.288675 - \y*\x*0.288675 + \y*\x*0.866025 , \y* 0.8164966 - \y*\x* 0.8164966 ); 
}


\node at (O) [left = 0.3mm of O] {$(0,0,0,N)$};

\end{tikzpicture}
\caption{A larger 3-dimensional 4-player tetrahedron (N=10), part of a lattice cone (not shown but easily imagined) and sitting in a continuous cone. The edges of the small dark grey tetrahedron indicate the basic steps of the lattice. Attention: contrary to what happens in the 2-dimensional case,  the large tetrahedron is NOT paved by isometric copies of the small tetrahedron.}
    \label{fig2}
\end{figure}
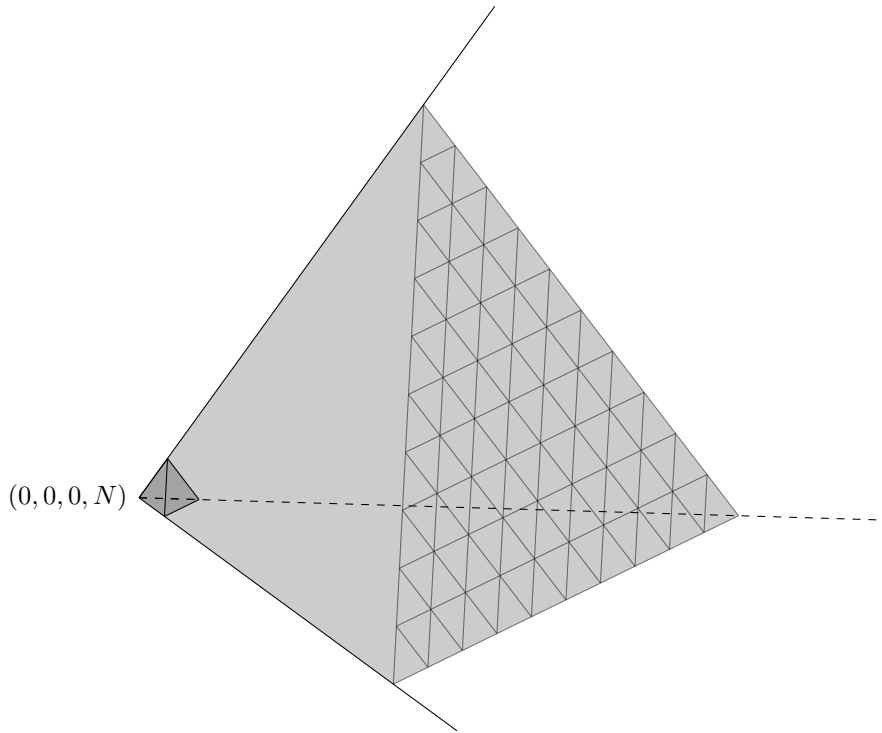

\subsection{From the simplex to the lattice cone}

The simplex $\mathfrak S_N$ lies in the lattice $\mathfrak L$ generated by
the vectors  $(e_i-e_j)$, $1\le i\neq j\le k$, in the hyperplane 
$\{\mathbf x=(x_1,\dots,x_k): \sum_1^k x_i=N\}$. In that hyperplane, we can consider the continuous open half-cone (here $\mathbf x\in \mathbb R^k$)
$$\mathcal C=\left\{\mathbf x= (t\xi_1,\dots, t\xi_{k-1}, N(1-t)): t> 0, \xi_i>0, 1\le i\le k-1, \sum_1^{k-1} \xi_i=N\right\}$$
and 
the lattice cone
$$\mathfrak C_\mathfrak L=\mathcal C\cap \left\{\mathbf x=(x_1,\dots,x_k)\in \mathbb Z^k: \mathbf x=\sum_{1\le i<j\le k} m_{ij}(e_i-e_j), m_{ij}\in\mathbb Z\right\}.$$ 
Note that the tip of this cone is the point $\mathbf t_N$ with coordinates $$t_1=\dots=t_{k-1}=0,\;t_k=N.$$ 
See Figure \ref{fig2}. Note also that this description of the cone is NOT a description in polar coordinate, that is, the point $(\xi_1,\dots,\xi_{k-1})$
is not on a sphere centered at $\mathbf t_N$ in the $\{\mathbf x: \sum_1^N x_i=N\}$. It is in the $(k-2)$-dimensional face of the simplex that lies in $\{x_k=0\}$.

Now, in addition of the Perron-Frobenius eigenfunction $\phi_0$ defined in $\mathfrak S_N$, consider a function $u$ defined on the lattice  $\mathfrak L $ satisfying
\begin{equation}\label{coneprofile}
\left\{\begin{array}{l}\sum_{\mathbf y}\widetilde{K}(\mathbf x,\mathbf y)u(\mathbf y)=u(\mathbf x) \mbox{ for } \mathbf x\in \mathfrak C_\mathfrak L\\
u(\mathbf x)=0 \mbox{ when } \mathbf x\in\mathcal \mathfrak L\setminus \mathfrak C_\mathfrak L \mbox{ and } u>0 \mbox{ in }\mathfrak C_\mathfrak L.
\end{array}\right.\end{equation}
In words, $u$ is a positive $\widetilde{K}$-harmonic function in $\mathfrak C_\mathfrak L$ which vanishes on 
$\mathfrak L\setminus \mathfrak C_\mathfrak L $ (that is, in particular, on the  boundary of $\mathfrak C_\mathfrak L$. 
\begin{defin}We call harmonic profile for the lattice cone $\mathfrak C_\mathfrak L$ any function $u$  satisfying (\ref{coneprofile}). 
\end{defin}
Such a function is unique up to multiplication by a positive constant. In other words, any two harmonic profiles are constant positive multiples of each other.

The following key property holds. It explains the important role played by the profile: Any other local positive solution vanishing at the boundary can be controlled using the profile.   Consider the following notation.
Let $B=B(\mathbf x,r)$ be the  Euclidean ball of radius $r$ around $\mathbf x$ and set 
$$\mathfrak B_\mathfrak L= B\cap \mathfrak C_\mathfrak L,\;\;
\mathfrak B'_\mathfrak L= B(\mathbf x,r/2)\cap \mathfrak C_\mathfrak L,
$$
$$\overline{\mathfrak B}_\mathfrak L=\{\mathbf x\in \mathfrak L: \mathbf x \mbox{ has a lattice neighbor in } \mathfrak B_\mathfrak L\},$$ and 
$$\partial^*\mathfrak B_\mathfrak L=\{\mathbf x\in \mathfrak L\setminus \mathfrak C_\mathfrak L: \mathbf x \mbox{ has a lattice neighbor in } \mathfrak B\}.$$

In words, $\mathfrak B_\mathfrak L$ is the trace of $B(\mathbf x,r)$ in the lattice cone $\mathfrak C_\mathfrak L$,  $\mathfrak B'_\mathfrak L$ is the trace of $B(\mathbf x,r/2)$ in that  lattice cone,
$\overline{\mathfrak B}_\mathfrak L$ is $\mathfrak B_\mathfrak L$ together with all the lattice points that have a lattice-neighbor in $\mathfrak B_\mathfrak L$, and $\partial^*\mathfrak B_\mathfrak L$ is the part of  $\overline{\mathfrak B}_\mathfrak L\setminus \mathfrak B_\mathfrak L
$ that lies on the boundary of the cone $C$. 

\begin{thm} \label{thH} Fix a constant $C_0$. Let $\mathbf x$ be a point in $\mathfrak C_\mathfrak L$ and $r>0$. 
Let $v$ and $w$ be two positive functions in $\mathfrak B$ which are defined on $\overline{\mathfrak B}_\mathfrak L$ and  vanish along $\partial^*\mathfrak B_\mathfrak L$. Assume that $v$ and $w$ are solutions of
$$(I-\widetilde{K})v= pv,\;\;(I-\widetilde{K})w= qw \mbox{ in } \mathfrak B_\mathfrak L$$
where $p,q$ are functions satisfying $|p|,|q|\le C_0/r^2.$
Then there is a constant $C$ (depending on $k$ and $C_0$ but not on $N$, $\mathbf x,$ $r$, $v$, $w$) such that
$$\sup_{\mathfrak B'_\mathfrak L}\left\{\frac{v}{w}\right\}\le C\inf_{\mathfrak B'_\mathfrak L}\left\{\frac{v}{w}\right\}.$$
\end{thm}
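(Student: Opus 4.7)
The plan is to reduce the statement to two standard ingredients familiar from the theory of random walks in Lipschitz domains: an \emph{interior} elliptic Harnack inequality for solutions of the perturbed equation $(I-\widetilde{K})v = pv$ at scale $r$, and a \emph{boundary Harnack principle} comparing two positive Dirichlet-vanishing solutions. The key observation that makes the perturbed case fall within the classical framework is that the scaling $|p|,|q|\le C_0/r^2$ matches the natural diffusive time $\asymp r^2$ needed by the walk driven by $\widetilde{K}$ to exit $B(\mathbf x,r)$, so the multiplicative Feynman--Kac-type factor coming from $p$ is bounded by $e^{C_0}$ throughout.

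For the interior part, I would prove $\sup v \le C \inf v$ on $B(\mathbf x, r/2)\cap \mathfrak L$ whenever $B(\mathbf x, r)$ is contained in $\mathfrak C_\mathfrak L$ at distance at least $c r$ from the cone boundary, by the standard probabilistic chaining argument: the transition density of $\widetilde{K}$ at time $\asymp r^2$ is uniformly bounded below on pairs of points at distance $O(r)$ (immediate since $\widetilde{K}$ is the uniform nearest-neighbor walk on a fixed lattice in the hyperplane), and the perturbation is absorbed as just noted. Then I would split the claim according to whether $d(\mathbf x,\partial \mathfrak C_\mathfrak L) \ge r/4$ or not. In the first case, $\mathfrak B'_\mathfrak L$ is at definite distance from $\partial \mathfrak C_\mathfrak L$, the Dirichlet condition on $\partial^*\mathfrak B_\mathfrak L$ is inert, and interior Harnack applied separately to $v$ and $w$ yields the conclusion.

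The heart of the argument is the remaining boundary case. Fix a ``corkscrew'' reference point $\mathbf x_0 \in \mathfrak B'_\mathfrak L$ with $d(\mathbf x_0,\partial \mathfrak C_\mathfrak L)\asymp r$, whose existence with $k$-dependent constants follows from the uniform polyhedral geometry of $\mathfrak C_\mathfrak L$. I would then establish a two-sided Carleson-type estimate: show that there is a purely geometric function $h$ (a hitting/escape probability for the walk killed at $\partial \mathfrak C_\mathfrak L$) such that $v(\mathbf y)\asymp v(\mathbf x_0)\,h(\mathbf y)$ on $\mathfrak B'_\mathfrak L$, and likewise for $w$. The upper bound comes from the perturbed Feynman--Kac representation of $v$ as an integral of $v$ over the non-boundary part of the exit distribution of a slightly enlarged ball, pushed back to $\mathbf x_0$ by interior Harnack along an interior chain and using that $v$ vanishes on $\partial^*\mathfrak B_\mathfrak L$; the lower bound comes from the dual estimate that the walk started at $\mathbf x_0$ can be guided to a fixed neighborhood of any $\mathbf y \in \mathfrak B'_\mathfrak L$ with probability at least $c\,h(\mathbf y)$. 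Taking the ratio $v/w$ cancels the common $h(\mathbf y)$ factor and pinches it between fixed multiples of $v(\mathbf x_0)/w(\mathbf x_0)$, which gives the theorem with a constant depending only on $k$ and $C_0$. The main obstacle is controlling, uniformly in $N,\mathbf x,r$, the harmonic measure of thin neighborhoods of $\partial \mathfrak C_\mathfrak L$ for the perturbed walk; this is where the Lipschitz character of the polyhedral cone $\mathfrak C_\mathfrak L$ is indispensable, and it is essentially the classical difficulty at the core of any boundary Harnack principle of Ancona or Bass--Burdzy type, adapted here to the lattice setting with a lower-order perturbation.
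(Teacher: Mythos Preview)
Your plan via Carleson-type estimates and the classical boundary Harnack machinery (in the spirit of Ancona or Bass--Burdzy) is a legitimate route, and your scaling remark that the Feynman--Kac weight is bounded by $e^{C_0}$ over the relevant time horizon is the right reason the perturbation is harmless. However, the paper takes a different and shorter path. It first reduces to $p=0$ (so that $v$ is genuinely $\widetilde{K}$-harmonic in $\mathfrak B_\mathfrak L$ and vanishes on $\partial^*\mathfrak B_\mathfrak L$), observing that the general statement follows by applying this special case twice. Then, instead of attacking the boundary directly, it performs the Doob transform $\widetilde{K}_v(\mathbf x,\mathbf y)=v(\mathbf x)^{-1}\widetilde{K}(\mathbf x,\mathbf y)\,v(\mathbf y)$, obtaining a reversible Markov chain on $\mathfrak B_\mathfrak L$ with reversible measure $v^2$. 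The key input, drawn from \cite{DHESC1,DHESC2}, is that this transformed chain is a \emph{Harnack chain}, i.e., satisfies an interior elliptic Harnack inequality with constants independent of the particular $v$. Since $w/v$ is globally positive on $\mathfrak B_\mathfrak L$ and satisfies $(I-\widetilde{K}_v)(w/v)=q\,(w/v)$ with $|q|\le C_0/r^2$, the conclusion is simply the interior Harnack inequality for this transformed chain applied to the single function $w/v$; the boundary condition has been absorbed into the chain itself.

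The trade-off is clear. The Doob-transform route entirely bypasses the harmonic-measure and thin-neighborhood estimates you correctly flag as the main obstacle, at the cost of importing the (nontrivial) fact that the $v$-transformed walk is a Harnack chain uniformly in $v$. Your route stays with the original walk and is self-contained in the probabilistic framework you sketch, but it forces you to redo, in the perturbed lattice setting, essentially the full boundary Harnack argument; in particular, the two-sided estimate $v(\mathbf y)\asymp v(\mathbf x_0)\,h(\mathbf y)$ with a single geometric $h$ is already most of the theorem, and carrying the perturbation $p$ through those estimates uniformly requires some care beyond the one-line Feynman--Kac bound.
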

This is a deep and rather intricate theorem and we explain  where it comes from. First, without loss of generality, we can assume that $p=0$ as the general result is a consequence of this special case applied twice. In the context of strictly local Dirichlet spaces, analogous results are given in \cite{LierlSC,LierlH}.  The direct proof in the discrete case goes as follows.  Assume $p=0$. Use the function $v$ to perform a Doob-transform by setting $\widetilde{K}_v(\mathbf x,\mathbf y)=\frac{1}{v(\mathbf x)}\widetilde{K}(\mathbf x,\mathbf y)v(\mathbf y)$, $\mathbf x,\mathbf y\in \mathfrak B$. This is a reversible Markov chain in $\mathfrak B_\mathfrak L$ with reversible measure $v^2$. Moreover, it is a Harnack chain (see \cite{DHESC1,DHESC2}). A version of the elliptic Harnack inequality for the solution $w/v$ of $(I-\widetilde{K})(w/v)=q(w/v)$ gives the desired result. 

\begin{cor} \label{cor-uphi}Let $u$ be a harmonic profile for $\mathfrak C_\mathfrak L$. There exists a constant $C_k\ge 1$ such that, setting $$\mathfrak V_N=\mathfrak S_N\cap \{x_k\ge N/k\},$$  
$$ u(\mathbf x) \frac{\phi_0(\mathbf y)}{C_k u(\mathbf y)}\le \phi_0(\mathbf x)\le u(\mathbf x) 
 \frac{C_k\phi_0(\mathbf y)}{u(\mathbf y)} \mbox{ for all } \mathbf x,\mathbf y\in \mathfrak V_N.$$
\end{cor}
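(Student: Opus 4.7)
The plan is to derive the corollary by applying Theorem \ref{thH} along a Harnack chain inside $\mathfrak V_N$, taking $v=\phi_0$ and $w=u$. First I record the ``PDE'' data: extending $\phi_0$ by $0$ on lattice neighbors outside $\mathfrak S_N$, the eigenvalue equation $K\phi_0=\beta_0\phi_0$ becomes $(I-\widetilde K)\phi_0 = (1-\beta_0)\phi_0$ on $\mathfrak S_N$; the profile $u$ satisfies $(I-\widetilde K) u = 0$ on $\mathfrak C_\mathfrak L$ by construction; and Lemma \ref{lem-phi1} gives $1-\beta_0 \asymp N^{-2}$.

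Next, fix the scale $r := cN$ for a small $c = c(k) \in (0, 1/(2k))$ and check the hypotheses of Theorem \ref{thH} at any lattice center $\mathbf z \in \mathfrak V_N$. Since $x_k(\mathbf z) \ge N/k > 2r$, the Euclidean ball $B(\mathbf z, r)$ avoids $\{x_k \le 0\}$, so $\mathfrak B_\mathfrak L := B(\mathbf z, r) \cap \mathfrak C_\mathfrak L \subset \mathfrak S_N$ and both $\phi_0, u$ are strictly positive on $\mathfrak B_\mathfrak L$. Lattice points in $\partial^* \mathfrak B_\mathfrak L$ must have some coordinate $x_i \le 0$ with $i < k$ (otherwise they would lie in $\mathfrak C_\mathfrak L$), so they lie outside $\mathfrak S_N$ as well, and both $\phi_0$ and $u$ vanish on $\partial^* \mathfrak B_\mathfrak L$. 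With $p = 1-\beta_0$, $q = 0$, and $|p|, |q| \le C_0/r^2$ for $C_0 = C_0(c,k)$, Theorem \ref{thH} yields
$$\sup_{\mathfrak B'_\mathfrak L} \frac{\phi_0}{u} \le C(k) \inf_{\mathfrak B'_\mathfrak L} \frac{\phi_0}{u}.$$

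The final step is to chain. Fix any reference lattice point $\mathbf y^* \in \mathfrak V_N$ close to $(N/k,\dots,N/k)$. Since $\mathfrak V_N$ is convex (intersection of the simplex with a half-space) with diameter $\asymp N$, any $\mathbf x \in \mathfrak V_N$ can be joined to $\mathbf y^*$ by a polygonal path with lattice vertices in $\mathfrak V_N$, segments of length $\le r/4$, and boundedly many segments $L = L(k)$ independent of $N$. Consecutive vertices both lie in $\mathfrak B'_\mathfrak L$ for a ball centered at one of them, so the display above gives a factor-$C(k)$ comparison of $\phi_0/u$ at consecutive vertices. Composing the $L$ comparisons gives $\phi_0(\mathbf x)/u(\mathbf x) \asymp \phi_0(\mathbf y^*)/u(\mathbf y^*)$ with constants depending only on $k$; applying this at both $\mathbf x$ and $\mathbf y$ in $\mathfrak V_N$ and dividing yields the corollary. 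The main obstacle is precisely the geometric verification that every point of $\mathfrak V_N$ admits a Harnack ball of scale $N$ on which $\phi_0$ and $u$ vanish on the same part of the boundary; this forces the restriction to $\mathfrak V_N$, since near the face $\{x_k = 0\}$ one has $\phi_0 = 0$ while $u > 0$, and a direct application of Theorem \ref{thH} is no longer possible.
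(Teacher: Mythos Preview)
Your proposal is correct and follows exactly the route the paper indicates: the paper's own proof consists of the single sentence ``This corollary is a special case of \cite[Theorem 8.13]{DHESC1}. It also follows from Theorem \ref{thH} above,'' and you have supplied the standard details of that second derivation (eigenvalue equation for $\phi_0$ with $p=1-\beta_0\asymp N^{-2}$, profile equation for $u$ with $q=0$, verification that on balls of radius $cN$ centered in $\mathfrak V_N$ both functions vanish on the same piece $\partial^*\mathfrak B_\mathfrak L$ of the cone boundary, and a Harnack chain of bounded length). One minor point worth tightening: when you discretize the straight segment from $\mathbf x$ to $\mathbf y^*$ into lattice vertices, the nearest lattice point to a point of the segment need not lie in $\mathfrak V_N$ itself; it is cleaner to note that the argument only requires ball centers with $x_k\ge 2cN$ (not $x_k\ge N/k$), and that this slightly enlarged region comfortably contains lattice approximants to the segment.
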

This corollary is a special case of \cite[Theorem 8.13]{DHESC1}. It also follows from Theorem \ref{thH} above.  The function $\phi_0$ has the same behavior in each of the corners of the simplex $\mathfrak S_N$, and, in each of these corners, this behavior is comparable to the behavior of the function $u$ near the tip of the cone $\mathfrak C$.
The region $\mathfrak V_N$ is pictured in Figure \ref{fig4}.

\subsection{From the lattice cone to the continuous cone}
In general, there is no easy way to compute the behavior of the lattice cone profile $u$. However, consider the following continuous version of the profile.

\begin{defin}Let $V_\Omega=\{x: x=r\theta, r>0, \theta\in \Omega \subseteq \mathbb S^{d-1}\}$ be an open half-cone  in Euclidean space $\mathbb R^d$ with base $\Omega$.  Call harmonic profile for (the continuous half-cone) $V_\Omega$, any function $h_V: V\to (0,+\infty)$ which is harmonic in $V$ (that is, is $\mathcal C^2$ in $V$ and satisfies $\Delta h_V=0$ in $V$) and vanishes continuously at the boundary of the cone.
 \end{defin}
Without assumption on the base $\Omega$, a cone may admit many very different profiles. Here we are only interested with the case when $\Omega$ is a nice connected polygonal subset of the sphere $\mathbb S^{d-1}$ in $\mathbb R^d$. 
See, e.g.,  Figure \ref{fig3}.

\begin{prop}[Folklore] \label{prop-alpha}Let $\Omega$ be an open connected subset of $\mathbb S^{d-1}$. Consider the Perron-Frobenius eigenvalue and eigenfunction $\lambda_\Omega,\psi_\Omega$ of the sphere-Laplacian with zero boundary condition on $\Omega$. Then the function
$$x=r\theta\mapsto h_V(x)=r^{\alpha_V}\psi_\Omega(\theta),\;\;\alpha_V=\sqrt{((d/2)-1)^2+\lambda_\Omega}-((d/2)-1) $$
is a harmonic profile for $V$.
\end{prop}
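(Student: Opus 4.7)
The plan is to verify the claim by direct computation after writing the Euclidean Laplacian in spherical coordinates and separating variables. Throughout, write a point $x\in\mathbb R^d\setminus\{0\}$ in polar form $x=r\theta$ with $r=|x|>0$ and $\theta=x/|x|\in\mathbb S^{d-1}$, and use the standard identity
\begin{equation*}
\Delta = \partial_r^2 + \frac{d-1}{r}\,\partial_r + \frac{1}{r^2}\,\Delta_{\mathbb S^{d-1}},
\end{equation*}
where $\Delta_{\mathbb S^{d-1}}$ denotes the (nonpositive) spherical Laplacian.

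First I would apply this identity to the factored ansatz $h(r\theta)=r^{\alpha}\psi_\Omega(\theta)$. The radial part gives $[\alpha(\alpha-1)+(d-1)\alpha]r^{\alpha-2}\psi_\Omega(\theta)=[\alpha^2+(d-2)\alpha]r^{\alpha-2}\psi_\Omega(\theta)$, while the spherical part, using the eigenfunction equation $\Delta_{\mathbb S^{d-1}}\psi_\Omega=-\lambda_\Omega\psi_\Omega$, contributes $-\lambda_\Omega r^{\alpha-2}\psi_\Omega(\theta)$. Summing the two contributions, $\Delta h=0$ on $V\setminus\{0\}$ is equivalent to the indicial equation
\begin{equation*}
\alpha^2+(d-2)\alpha-\lambda_\Omega=0,
\end{equation*}
whose two roots are $\alpha_\pm=-(d/2-1)\pm\sqrt{(d/2-1)^2+\lambda_\Omega}$. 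The positive root is exactly $\alpha_V$ in the statement.

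Next I would check the three remaining conditions in the definition of harmonic profile. Positivity of $h_V$ in $V$ follows because $\psi_\Omega$ may be chosen strictly positive on the open base $\Omega$ (standard Perron–Frobenius for $\Delta_{\mathbb S^{d-1}}$ with Dirichlet condition on a connected base, which is why the principal eigenvalue is singled out), and $r^{\alpha_V}>0$ for $r>0$. Smoothness of $h_V$ on $V$ follows from interior elliptic regularity of $\psi_\Omega$ on $\Omega$. Continuous vanishing at $\partial V$ splits into two parts: along the lateral boundary $\{r\theta:r>0,\;\theta\in\partial\Omega\}$, it follows from the continuous Dirichlet vanishing of $\psi_\Omega$ on $\partial\Omega$; at the tip $0$, it follows from $\alpha_V>0$, which forces $r^{\alpha_V}\to 0$ as $r\to 0$ uniformly over the bounded factor $\psi_\Omega$. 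This is precisely the reason one selects the positive root $\alpha_+$ rather than $\alpha_-$: the negative root would give a function blowing up at the origin and hence not vanishing continuously at the tip of the cone.

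The proof is essentially a computation, so there is no real obstacle; the only subtle point worth flagging is the selection of the root. One can either appeal to the tip condition, as above, or note that among the two separated solutions only $r^{\alpha_+}\psi_\Omega$ lies in a natural class where uniqueness (up to a positive multiple) of the harmonic profile holds on polygonal spherical bases. For the purposes of the proposition, it suffices to exhibit one such profile, which is precisely what the computation produces.
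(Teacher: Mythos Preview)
Your proof is correct and is the standard separation-of-variables argument for this classical fact. The paper itself does not supply a proof: the proposition is labeled ``Folklore'' and is simply stated without justification, so there is nothing to compare against beyond noting that your derivation of the indicial equation $\alpha^2+(d-2)\alpha-\lambda_\Omega=0$ and your selection of the positive root (to ensure vanishing at the tip) are exactly what one expects.
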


\begin{figure}
    \centering

\begin{tikzpicture}[tdplot_main_coords, scale=4]

\shade[ball color = lightgray,
    opacity = 0.5
] (0,0,0) circle (28.5pt);

\draw[color = gray] (0,0,0) circle (1);

\coordinate (O) at (0,0,0);
\coordinate (A) at (1,0,0);
\coordinate (B) at (0.5, 0.866025, 0);
\coordinate (C) at (0.5, 0.288675, 0.8164966);



\fill[fill=black] (O) circle (0.3pt);
\fill[fill=black] (A) circle (0.3pt);
\fill[fill=black] (B) circle (0.3pt);
\fill[fill=black] (C) circle (0.3pt);


\draw[dashed] (O) -- (B);
\draw (O) -- (C);
\draw (O) -- (A);
\draw (A) -- (B);
\draw (B) -- (C);
\draw (A) -- (C);


\fill[opacity=0.4] (A) -- (B) -- (C) -- cycle;

\fill[opacity=0.4] (O) -- (A) -- (C) -- cycle;

\fill[opacity=0.4] (A) arc (0:60:1) -- cycle;
\tdplotdrawarc[thick]{(O)}{1}{0}{60}{}{}

\tdplotsetrotatedcoords{-90}{70.25}{0}
\tdplotdrawarc[tdplot_rotated_coords, thick]{(O)}{1}{90}{150}{}{}
\fill[tdplot_rotated_coords, color=white, opacity=0.3] (O) -- (A) arc (90:150:1) -- cycle;

\tdplotsetrotatedcoords{-30}{109.7356103}{0}
\fill[tdplot_rotated_coords, opacity=0.4] (B) arc (90:150:1) -- cycle;
\tdplotdrawarc[tdplot_rotated_coords, thick]{(O)}{1}{90}{150}{}{}



\end{tikzpicture}

    \caption{The harmonic profile of a continuous cone is computed using the Perron-Frobenius eigenvalue and eigenfunction of the spherical Laplacian with Dirichlet boundary condition on the spherical base of the cone. In this figure, the cone is associated with the tetrahedron and the base is a particular equilateral triangle on the sphere, the equilateral triangle corresponding to the vertices of the tetrahedron.}
    \label{fig3}
\end{figure}
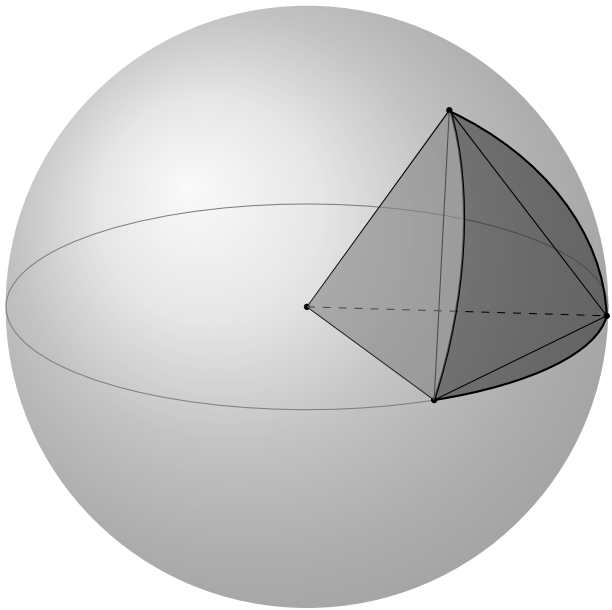

The works \cite{Var,DW1,DW2} study the harmonic profiles of cones (and more general sets in the case of \cite{Var}) and provide very useful comparison between the harmonic profile of a discrete cones and that of the associated continuous cone.  For simplicity, in $d$-dimensional Euclidean space, $\mathbb R^d$ consider a co-compact  lattice  (discrete subgroup) $L$ with the property that simple random walk on $L$ has covariance matrix the identity (or proportional to the identity). 

Let $V_\Omega=\{x: x=r\theta, r>0, \theta\in \Omega \subseteq \mathbb S^{d-1}\}$ be an open half-cone and assume that $\Omega$ is a convex polygonal subset of the sphere. Let
$V_L^*=L\cap V$ be the associated lattice-cone. A (discrete) harmonic profile for that lattice-cone is a function $h_{V,L}$ on the lattice $L$ which vanishes on $L\setminus V^*_L$ and non-negative harmonic in $V^*_L$ and not identically $0$. Here harmonic is with respect to the Markov operator associated with simple random walk on $L$. Amongst the results of \cite{Var,DW2} is the following fact of importance to us. There are positive constants $c,C$ and $D$ such that, for any fix $x_0$
in $V^*_L$ at distance at least $D$ from $\mathbb R^d\setminus V$, we have $h_{V,L}(x_0)>0$ and, for all $x\in V^*_L$ at distance at least $A$ from $\mathbb R^d\setminus V$,
\begin{equation} \label{comp1}c\frac{h_V(x)}{h_V(x_0)}\le \frac{h_{V,L}(x)}{h_{V,L}(x_0)}\le C\frac{h_V(x)}{h_V(x_0)} .
    \end{equation}
The reader should note that, in general, the lattice $L$ and the cone $V$ do not have to be neatly positioned with respect to each other. This means that $V^*_L$, viewed has a subgraph of $L$
can have isolated points near the tip of the cone (at such point, $h_{V,L}$ has to vanish). The estimate (\ref{comp1}) provide a uniform comparison of $h_V$ and $h_{V,L}$ a few steps away from the boundary.
In general, it is indeed possible that there is a sequence of  points $(x_j)_1^\infty$ in $V^*_L$ whose distance to the boundary is positive but tends to $0$ as $j$ tends to infinity. This explain the role of the constant $A$.  

We now return to the particular case of interest to us, that is,  the continuous open half-cone 
$$\mathcal C=\left\{\mathbf x= (t\xi_1,\dots, t\xi_{k-1}, N(1-t)): t> 0, \xi_i>0, 1\le i\le k-1, \sum_1^{k-1} \xi_i=N\right\}$$
and 
the lattice cone
$$\mathfrak C_\mathfrak L=\mathcal C\cap \left\{\mathbf x=(x_1,\dots,x_k): \mathbf x=\sum_{1\le i<j\le k} m_{ij}(e_i-e_j), m_{ij}\in\mathbb Z\right\}.$$ 
Recall that the tip of this cone is the point $\mathbf t_N$ with coordinates $t_1=\dots=t_{k-1}=0,t_k=N$. In this case, the continuous cone and the lattice are neatly arranged in the sense that all the lattice boundary points of 
 $\mathfrak C_\mathfrak L$ lies exactly on the topological boundary of the continuous cone $\mathcal C$ and no points in $\mathfrak C_\mathfrak L$ (viewed as a subgraph of $\mathfrak L$) are isolated. Moreover there exists $\epsilon_k>0$ such that any point in  $\mathfrak C_\mathfrak L$ is at distance at least $\epsilon_k$ from the boundary of $\mathcal C$. This allows us to rephrase (\ref{comp1}) in the following simplified form.
 Set $$\mathbf t'_N=(1,\dots,1,N-k+1)\in \mathfrak C_\mathfrak L.$$
This point stands in the lattice cone, closest to the tip $\mathbf t_N$. 
 \begin{prop} \label{prop-hh}Let $h$ and $h_\mathfrak L$ be, respectively the (continuous) harmonic profile of the continuous cone $\mathcal C$ and the (lattice) harmonic profile of the 
 lattice-cone $\mathfrak C_\mathfrak L$, both normalized by the condition
 $$h(\mathbf t'_N)=h_{\mathfrak L}(\mathbf t'_N)=1.$$ There are constants $c,C\in (0,+\infty)$ such that, for all $\mathbf x\in \mathfrak C_\mathfrak L$,
 $$c h(\mathbf x)\le h_\mathfrak L(\mathbf x)\le Ch(\mathbf x).$$
 \end{prop}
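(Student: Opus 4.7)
The plan is to specialize the general comparison (\ref{comp1}) of \cite{Var,DW2} to the present cone–lattice pair, to use the ``neat arrangement'' between $\mathcal C$ and $\mathfrak L$ noted in the preceding paragraph to eliminate the distance-to-boundary constant $A$, and then to exploit the prescribed normalization at $\mathbf t'_N$ to convert the resulting ratio bound into the pointwise sandwich inequality claimed in the proposition.

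First I would set up the framework of \cite{Var,DW2}. Translating so that $\mathbf t_N$ sits at the origin, the hyperplane $\{\sum x_i = N\}$ becomes a $(k-1)$-dimensional Euclidean space, and $\mathfrak L$ is a co-compact lattice in it. Since the symmetric group $S_{k-1}$ acts by permutation of the first $k-1$ coordinates leaving both $\mathfrak L$ and the jump kernel $\widetilde K$ invariant, the covariance matrix of the associated simple random walk commutes with every such permutation and is therefore a scalar multiple of the identity. After an isotropic rescaling (which leaves the statement to be proved unaffected) we may assume it equals the identity, matching the hypothesis of \cite{Var,DW2}. The base $\Omega$ of $\mathcal C$ is a geodesic spherical simplex in $\mathbb S^{k-2}$, hence convex polygonal, and the continuous profile $h$ is well defined by Proposition \ref{prop-alpha}.

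Next I would verify that the ``neat arrangement'' allows one to take the threshold $A$ in (\ref{comp1}) to be an absolute constant depending only on $k$. Indeed, every $\mathbf x = (x_1,\dots,x_k) \in \mathfrak C_\mathfrak L$ has integer coordinates satisfying $x_i \ge 1$ for $i = 1,\dots,k-1$, so its Euclidean distance to each coordinate hyperplane $\{x_i = 0\}$, and hence to $\mathbb R^{k-1}\setminus\mathcal C$, is bounded below by a fixed $\epsilon_k > 0$. Thus (\ref{comp1}) applies at every point of $\mathfrak C_\mathfrak L$. Choosing as the reference point $x_0 = \mathbf t'_N$, which itself lies in $\mathfrak C_\mathfrak L$ at a positive distance from the boundary of $\mathcal C$ depending only on $k$ (so the threshold $D$ is met as soon as $N \ge k$), the inequality (\ref{comp1}) becomes
$$c\,\frac{h(\mathbf x)}{h(\mathbf t'_N)} \le \frac{h_\mathfrak L(\mathbf x)}{h_\mathfrak L(\mathbf t'_N)} \le C\,\frac{h(\mathbf x)}{h(\mathbf t'_N)},$$
and the normalization $h(\mathbf t'_N) = h_\mathfrak L(\mathbf t'_N) = 1$ collapses this to the desired bound.

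The main obstacle is really bookkeeping rather than mathematics: one must confirm that the abstract comparison of \cite{Var,DW2} applies to our specific $(\mathcal C,\mathfrak L)$ (covariance proportional to identity, convex polygonal spherical base, no isolated boundary lattice points near the tip), that the reference point $\mathbf t'_N$ clears the threshold $D$ uniformly in $N$ (automatic once $N \ge k$, since the local geometry near the tip is independent of $N$), and that the harmonic profile $h_\mathfrak L$ is unique up to a positive scalar so that fixing its value at $\mathbf t'_N$ determines it unambiguously. Each of these is a direct consequence of the $S_{k-1}$-symmetry and the rigidity of the simplex/lattice pair, but each warrants an explicit verification.
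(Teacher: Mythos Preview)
Your proposal is correct and matches the paper's approach exactly: the paper gives no separate proof of this proposition but simply presents it as the specialization of (\ref{comp1}) to the neatly arranged pair $(\mathcal C,\mathfrak L)$, using the observation that every lattice point of $\mathfrak C_\mathfrak L$ sits at distance at least $\epsilon_k>0$ from $\partial\mathcal C$ to remove the threshold $A$. One small correction to your write-up: invoking only the $S_{k-1}$ action on the first $k-1$ coordinates is not enough to force the covariance to be scalar, since the permutation representation of $S_{k-1}$ on the $(k-1)$-dimensional hyperplane is reducible (invariant operators have the form $aI+bJ$); you should instead use the full $S_k$ action on all coordinates, whose restriction to the hyperplane $\{\sum x_i=0\}$ is the irreducible standard representation, so Schur's lemma applies---or simply compute the covariance directly and observe it equals $\tfrac{2}{k-1}$ times the identity on that hyperplane.
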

 
\subsection{Estimating $\phi_0$ in terms of the profile $h$ of the continuous cone}

Let $\lambda_k>0$ be the Perron-Frobenius eigenvalue (i.e., lowest eigenvalue) of the (positive) sphere-Laplacian with Dirichlet boundary condition in the regular spherical simplex $\Omega_k$ of dimension $k-2$
cut by the cone $\mathcal C$ on the unit sphere $\mathbb S^{k-2}$ of the $k-1$-space $\{\mathbf x: \sum_1^kx_i=N\}$ viewed has a vector space with origin at $\mathbf t_N=(0,\dots,0,N)$. This number is defined by the variational formula
\begin{equation}\label{lambda}\lambda_k=\inf\left\{\frac{\int_{\Omega_k}|\nabla f|^2 d\sigma}{\int_{\Omega_k}|f|^2d\sigma}: f\in \mathcal C^\infty_c(\Omega_k)\right\}.\end{equation}
\begin{defin}
\label{def-alpha} Set  $$\alpha_{k}=\sqrt{(((k-1)/2)-1)^2+\lambda_k}-(((k-1)/2)-1).$$
\end{defin}

Recall that $\phi_0$ is the (positive, normalized) Perron-Frobenious eigenfunction of $K$ on the lattice simplex $\mathfrak S_N$. See (\ref{defK}-(\ref{defphi0}).  Because of the uniqueness of $\phi_0$ and  the obvious symmetry of the problem
under permutation of the coordinates of a point $\mathbf x=(x_1,\dots,x_k)$, the function $\phi_0$
is symmetric under any permutation of these coordinates. Because $\sum _1^kx_i=N$ for any point in  $\mathfrak S_N$, at least one of the $x_i$'s, $1\le i\le k$, is larger or equal to $N/k$ and we can assume without loss of generality that $x_k\ge N/k$. See Figure \ref{fig4}.

\begin{figure}
    \centering
\begin{tikzpicture}[tdplot_main_coords, scale=2]


\coordinate (O) at (0,0,0);
\coordinate (A) at (0.25,0,0);
\coordinate (B) at (0.25*0.5, 0.25*0.866025, 0);
\coordinate (C) at (0.25*0.5, 0.25*0.288675, 0.25*0.8164966);

\coordinate (A2) at (4, 0,0);
\coordinate (B2) at (2, 4*0.866025 , 0);
\coordinate (C2) at (2, 4*0.288675 , 4* 0.8164966 );



\draw[dashed] (O) -- (2.5, 5*0.866025 , 0);
\draw (O) -- (2.5, 5*0.288675 , 5* 0.8164966 );
\draw (O) -- (5, 0,0);

\fill[opacity=0.2] (O) -- (A2) -- (B2) -- (C2) -- cycle;


\foreach \x in {0.1, 0.2, 0.3, 0.4, 0.5, 0.6, 0.7, 0.8, 0.9, 1}
\foreach \y in {4}
{\draw[opacity=0.5] (\y - \y*\x + \y*\x*0.5,  \y*\x*0.866025, 0) 
-- (\y - \y*\x  + \y*\x*0.5,  \y*\x*0.288675, \y*\x*0.8164966 ); 
\draw[opacity=0.5] (\y*0.5 - \y*\x*0.5 + \y*\x ,  \y*0.866025 - \y*\x*0.866025, 0  ) 
-- (\y*0.5 - \y*\x*0.5 + \y*\x*0.5,  \y*0.866025 - \y*\x*0.866025 + \y*\x*0.288675 , \y*\x* 0.8164966  ); 
\draw[opacity=0.5] (\y*0.5 - \y*\x*0.5 + \y*\x ,  \y*0.288675 - \y*\x*0.288675 , \y*0.8164966 - \y*\x*0.8164966 ) 
-- (\y*0.5 - \y*\x*0.5 + \y*\x*0.5 ,  \y*0.288675 - \y*\x*0.288675 + \y*\x*0.866025 , \y* 0.8164966 - \y*\x* 0.8164966 ); 
}

\fill[opacity=0.2] (O) -- (3*1, 0,0) -- (3*0.5, 3*0.866025 , 0) -- (3*0.5, 3*0.288675 , 3* 0.8164966 ) -- cycle;
\draw (O) -- (3*0.5, 3*0.288675 , 3* 0.8164966 );
\draw (3*1, 0,0) -- (3*0.5, 3*0.866025 , 0);
\draw (3*0.5, 3*0.866025 , 0) -- (3*0.5, 3*0.288675 , 3* 0.8164966 );
\draw (3*1, 0,0) -- (3*0.5, 3*0.288675 , 3* 0.8164966 );

\coordinate (L) at (0.6, 0.288675 , 0.8164966 );

\node at (L) [ left = 0.4cm of L] {$\left\{\mathbf{x} \; : \; x_4 \ge N/4 \right\}$};

\end{tikzpicture}
    \caption{The dark grey area represents $\mathfrak S_n\cap\{\mathbf x: x_k\ge N/k\}$. Here $k=4$ (4-player game) and $N=10$. Note that the union of the zones corresponding to each corner cover the entire tetrahedron.}
    \label{fig4}
\end{figure}
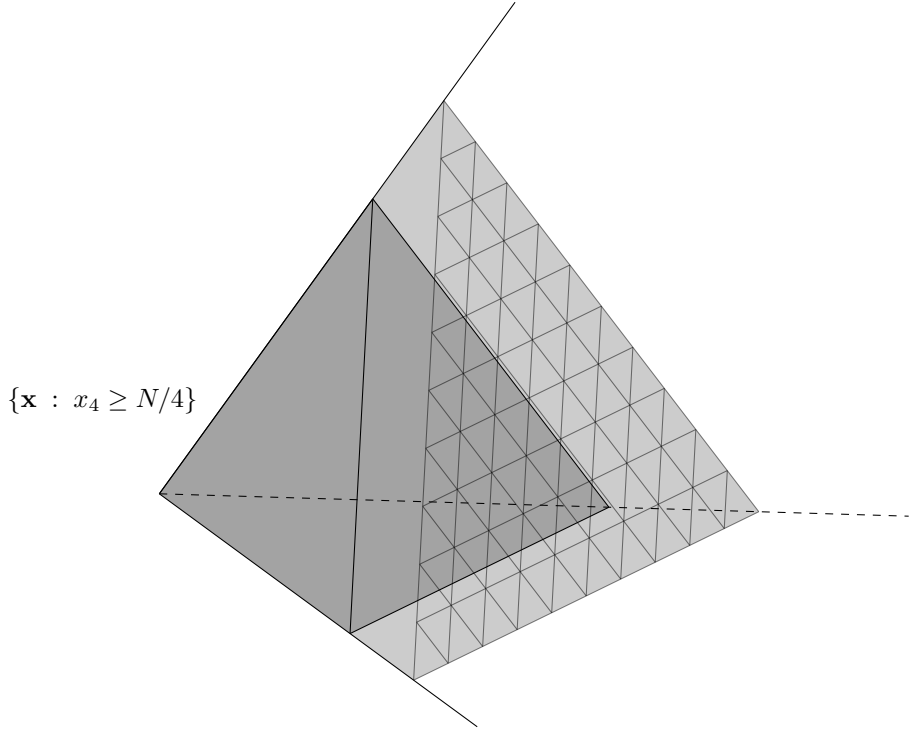

\begin{prop} \label{prop-hphi}There are constants $c_k\le C_k\in (0,+\infty)$ such that, in $ \mathfrak S_N\cap\{\mathbf x: x_k\ge N/k\}$,
$$c_k h(\mathbf x)\le N^{((k-1)/2)+\alpha_k}\phi_0(\mathbf x)\le C_k h(\mathbf x)  $$
\end{prop}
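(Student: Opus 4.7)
The plan is to chain together three already-established comparisons, using the center point $\mathbf{o}_N$ as a pivot. Writing $\mathfrak V_N = \mathfrak S_N\cap \{x_k\ge N/k\}$, I will verify that
$$\phi_0(\mathbf{x}) \asymp N^{-(k-1)/2 - \alpha_k}\, h(\mathbf{x}),\qquad \mathbf{x}\in \mathfrak V_N,$$
which is exactly the claim after multiplying both sides by $N^{(k-1)/2 + \alpha_k}$. The factor $N^{-(k-1)/2 - \alpha_k}$ will emerge from combining the sizes of $\phi_0(\mathbf o_N)$ and $h(\mathbf o_N)$.

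To carry this out, fix a lattice harmonic profile $u$ of $\mathfrak C_\mathfrak L$, normalized so $u(\mathbf{t}'_N)=1$; that is, take $u=h_\mathfrak L$ in the notation of Proposition \ref{prop-hh}. Since $\mathbf{o}_N\in \mathfrak V_N$, Corollary \ref{cor-uphi} applied with $\mathbf{y}=\mathbf{o}_N$ gives
$$\phi_0(\mathbf{x}) \asymp h_\mathfrak L(\mathbf{x})\, \frac{\phi_0(\mathbf{o}_N)}{h_\mathfrak L(\mathbf{o}_N)}.$$
Lemma \ref{lem-phi1} supplies $\phi_0(\mathbf{o}_N)\asymp N^{-(k-1)/2}$, and Proposition \ref{prop-hh} allows me to replace $h_\mathfrak L$ by $h$, since $h_\mathfrak L(\mathbf{x})/h_\mathfrak L(\mathbf{o}_N)\asymp h(\mathbf{x})/h(\mathbf{o}_N)$. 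Finally, Proposition \ref{prop-alpha} writes $h(\mathbf{y})= r^{\alpha_k}\psi_{\Omega_k}(\theta)$ in polar coordinates centered at the cone tip $\mathbf{t}_N$. For $\mathbf{o}_N$ (a lattice point close to $(N/k,\dots,N/k)$), a direct computation gives $r = |\mathbf{o}_N - \mathbf{t}_N| = N\sqrt{(k-1)/k}\,(1 + o(1))$, while the direction $\theta$ is the centroid direction of the regular spherical simplex $\Omega_k$, fixed and independent of $N$, sitting in the interior of $\Omega_k$ where $\psi_{\Omega_k}$ is bounded above and below by positive constants. Consequently $h(\mathbf{o}_N)\asymp N^{\alpha_k}$, and substituting back yields the displayed comparison.

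The substantive difficulties have been absorbed into the cited results: the boundary Harnack inequality of Theorem \ref{thH} (behind Corollary \ref{cor-uphi}) and the lattice-vs.-continuous profile comparison of \cite{Var,DW2} (behind Proposition \ref{prop-hh}). The only verifications remaining in the present proof are geometric: that $\mathbf{o}_N\in \mathfrak V_N$ and that its direction from $\mathbf{t}_N$ is an interior point of $\Omega_k$ independent of $N$. Both follow from the permutation symmetry of $\mathfrak S_N$ together with the smoothness and positivity of $\psi_{\Omega_k}$ on the open spherical simplex. Once these checks are made, the proof reduces to a direct composition of the cited estimates.
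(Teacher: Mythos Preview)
Your proof is correct and follows essentially the same approach as the paper's: both combine Corollary~\ref{cor-uphi} (with $u=h_\mathfrak L$) and Proposition~\ref{prop-hh} to obtain $\phi_0(\mathbf x)\asymp c_N\, h(\mathbf x)$ on $\mathfrak V_N$, and then pin down the constant $c_N$ by evaluating at $\mathbf o_N$ using $\phi_0(\mathbf o_N)\asymp N^{-(k-1)/2}$ and $h(\mathbf o_N)\asymp N^{\alpha_k}$. The only cosmetic difference is that the paper takes $\mathbf y=\mathbf t'_N$ as the pivot in Corollary~\ref{cor-uphi} (so the normalization $h(\mathbf t'_N)=1$ gives $\phi_0(\mathbf x)\asymp \phi_0(\mathbf t'_N)h(\mathbf x)$ directly) and then solves for $\phi_0(\mathbf t'_N)$ by specializing to $\mathbf x=\mathbf o_N$, whereas you pivot at $\mathbf o_N$ from the start; the two routes are equivalent.
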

\begin{proof} By Proposition \ref{prop-hh} and Corollary \ref{cor-uphi} with $u=h_\mathfrak L$, we have
$$\phi_0(\mathbf x)\asymp_k \phi_0(\mathbf t'_N) h(\mathbf x).$$
According to \cite[(5.16)]{DHESC2}, we also have $\phi_0(\mathbf o_N)^2\asymp_k N^{-(k-1)}$ where $\mathbf o_N$ is a lattice point 
nearest to the center $(N/k,\dots,N/k)$ of the simplex $\{\mathbf x=(x_1,\dots,x_k): x_i>0, \sum_1^k x_i=N\}$.   This gives
$$N^{-(k-1)/2}\asymp_k\phi_0(\mathbf o_N)\asymp_k \phi_0(\mathbf t'_N) h(\mathbf o_N)\asymp_k  \phi_0(\mathbf t'_N) N^{\alpha_k},$$
so that, as desired,  
$$\phi_0(\mathbf x) \asymp_k N^{-(\alpha_k+((k-1)/2))} h(\mathbf x).$$
\end{proof}

\section{Harmonic profile and $\phi_0$ in coordinates} 
\subsection{The continuous harmonic profile $h$ in coordinates when $k=4$}
The goal of this section is to provide (explicit) estimates for the continuous harmonic profile $h$ of the continuous cone $$\mathcal C=\left\{\mathbf x= (t \xi_1,t \xi_2, t \xi_3, N(1-t)): t> 0, \xi_i>0, 1\le i\le 3, \xi_1+\xi_2+\xi_3=N\right\}$$
in terms of the (free) coordinates $(x_1,\dots,x_{3})$ of the point $\mathbf x=(x_1,x_2,x_3,x_4)\in \mathcal C$.

By proposition \ref{prop-alpha}, we know that $h$ has the form
\begin{equation} \label{h1}
h(\mathbf x)= r^{\alpha_4}\psi_4(\theta)
\end{equation}
where $(r,\theta)$ are the polar coordinate of a point in the hyperplane $\sum_1^4x_i=N$ wit respect to the origin $\mathbf t_4$ (the tip or our cone $\mathcal C$). We postpone the discussion of the value of the real $\alpha_4$ but note that $\alpha_4\approx 5.68$. We note that
the radius $r$ satisfies
\begin{equation}
r\asymp x_1+x_2+x_3 \end{equation}
for any point $\mathbf x=(x_1,x_2,x_3,x_4)$ contained in $\mathcal C$.  So our goal is to understand the the function $\psi_4$. By definition, $\psi$ is the Perron-Frobenius eigenfunction of the spherical Laplacian in the spherical domain $\Omega_4$ cut by our cone $\mathcal C$ on the $2$-dimensional unit sphere centered at $\mathbf t_4$ in  
the $3$-space $\{\mathbf x: \sum_1^4x_i=N\}$. The final result we want to prove reads as follows.
\begin{prop} \label{prop-h}The harmonic profile $h$ of the cone $\mathcal C$
satisfies
$$ h(\mathbf x)\asymp (x_1+x_2+x_3)^{\alpha-3\beta +3}
  [(x_1+x_2) (x_1+x_3)(x_2+x_3)]^{\beta-2}x_1x_2x_3.
$$
where $\alpha=\alpha_4\approx 5.68$ is as defined above and $\beta=\pi/\arccos (1/3)\approx 2.55$.
\end{prop}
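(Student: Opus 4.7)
The plan is to apply Proposition~\ref{prop-alpha} to reduce the proof to an estimate for the spherical eigenfunction $\psi_4$ on the spherical triangle $\Omega_4$, and then to establish that estimate by matching the boundary behaviors of $\psi_4$ and of the candidate formula. By Proposition~\ref{prop-alpha}, $h(\mathbf{x}) = r^{\alpha_4}\psi_4(\theta)$, where $r$ is the distance from $\mathbf{x}$ to the tip $\mathbf{t}_4=(0,0,0,N)$ in the hyperplane $\{\sum x_i = N\}$ and $\psi_4$ is the principal Dirichlet eigenfunction of the spherical Laplacian on $\Omega_4 \subset \mathbb{S}^2$. The displacement from $\mathbf{t}_4$ to $\mathbf{x}$ equals $(x_1,x_2,x_3,-(x_1+x_2+x_3))$, so $r \asymp x_1+x_2+x_3$. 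Writing $\alpha=\alpha_4$, the proposition is thus equivalent to the homogeneous-degree-zero estimate
\[
\psi_4(\theta) \;\asymp\; F(\theta) \;:=\; \frac{[(x_1+x_2)(x_1+x_3)(x_2+x_3)]^{\beta-2}\,x_1 x_2 x_3}{(x_1+x_2+x_3)^{3\beta-3}}
\]
uniformly on $\Omega_4$.

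I would next analyze the geometry of $\Omega_4$. This is an equilateral spherical triangle whose three smooth arc-edges correspond to the faces $\{x_i=0\}$ of $\mathcal{C}$ and whose three corners correspond to the 1-dimensional edges $\{x_i=x_j=0\}$. A direct normal computation in the hyperplane $\{\sum x_i=N\}$ shows that the dihedral angle of $\mathcal{C}$ at each such 1D edge, equivalently the interior angle of $\Omega_4$ at each corner, equals $\theta_{\mathrm{dih}} = \arccos(1/3)$, so $\beta = \pi/\theta_{\mathrm{dih}}$ is precisely the critical vanishing exponent for positive Dirichlet harmonic functions at the apex of a planar wedge of opening $\theta_{\mathrm{dih}}$. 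Both $\psi_4$ and $F$ are comparable to positive constants on compact subsets of the interior of $\Omega_4$. On a neighborhood of an edge $\{x_i=0\}$ bounded away from the corners, $\psi_4$ vanishes linearly in the distance to the edge (by smoothness of the boundary and Hopf's lemma), a quantity comparable to $x_i/r$, while $F \asymp x_i/r$ there since all other factors are bounded above and below. Near a corner corresponding to $\{x_i=x_j=0\}$, introduce local polar coordinates $(\rho,\phi)$ on the sphere with $\rho$ the geodesic distance to the corner and $\phi \in [0,\theta_{\mathrm{dih}}]$; a blow-up reduces the eigenfunction equation to the flat 2D wedge Laplacian with Dirichlet boundary, giving the leading behavior $\psi_4 \asymp \rho^\beta \sin(\pi\phi/\theta_{\mathrm{dih}})$. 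Direct computation using $x_i \asymp \rho\sin\phi$ and $x_j \asymp \rho\sin(\theta_{\mathrm{dih}}-\phi)$ then yields $F \asymp \rho^\beta\,\sin\phi\,\sin(\theta_{\mathrm{dih}}-\phi)$, and the angular factor $\sin\phi\,\sin(\theta_{\mathrm{dih}}-\phi)$ is comparable to $\sin(\pi\phi/\theta_{\mathrm{dih}})$ on $[0,\theta_{\mathrm{dih}}]$ since both vanish linearly at each endpoint and are positive in between.

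Finally, I would globalize the local matchings by a chain of Harnack applications in the spirit of Theorem~\ref{thH} (in its continuous form; equivalently, by performing a Doob $F$-transform and applying an elliptic Harnack inequality to the quotient $\psi_4/F$). Interior Harnack bounds the ratio on compact interior pieces, the standard boundary Harnack extends the bound to tubular neighborhoods of the smooth arc-edges away from corners, and the corner blow-up extends it to neighborhoods of each of the three corners; covering $\Omega_4$ by finitely many such pieces then yields $\psi_4 \asymp F$ uniformly on $\Omega_4$. Multiplying through by $r^{\alpha} \asymp (x_1+x_2+x_3)^{\alpha}$ delivers the proposition. The main obstacle is the corner analysis: justifying rigorously that $\psi_4$ is asymptotically governed by the planar wedge profile $\rho^\beta\sin(\pi\phi/\theta_{\mathrm{dih}})$ requires careful handling of the curvature of $\mathbb{S}^2$ and of the non-smooth intersection of two great-circle arcs at each corner, and this is the most delicate ingredient of the argument.
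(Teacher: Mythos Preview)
Your outline is correct and follows the same strategy as the paper: reduce to the spherical eigenfunction via $h=r^{\alpha}\psi_4$, identify the corner angle as $\arccos(1/3)$ so that the wedge exponent is $\beta$, verify that the candidate $F$ has the right local behavior at interior points, edges, and corners, and patch using boundary Harnack and symmetry. One tactical difference is worth flagging. You work on $\mathbb{S}^2$ and need a blow-up at each corner to justify $\psi_4\asymp\rho^{\beta}\sin(\pi\phi/\theta_{\mathrm{dih}})$, which, as you rightly say, involves controlling the curvature perturbation. The paper instead stays in the ambient cone $\mathcal{C}\subset\mathbb{R}^3$: in a small Euclidean ball around a point on an edge of $\mathcal{C}$ (equivalently, a corner of $\Omega_4$), the cone coincides \emph{exactly} with a Euclidean wedge $W$, and both $h$ and the explicit wedge profile $h_W(\boldsymbol{\zeta})\asymp(\zeta_i+\zeta_j)^{\beta-2}\zeta_i\zeta_j$ are genuinely harmonic in $\mathbb{R}^3$ vanishing on the same boundary pieces, so the continuous analogue of Theorem~\ref{thH} applies verbatim with no perturbation argument. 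This sidesteps the obstacle you identify as the most delicate step. One caveat on your globalization: the Doob $F$-transform is not quite well-posed as stated, since $F$ is neither harmonic nor an (approximate) eigenfunction and Theorem~\ref{thH} therefore does not apply to the pair $(\psi_4,F)$ directly; your underlying covering argument (local two-sided bounds on overlapping pieces, then match constants across overlaps) is, however, sound and is essentially what the paper's terse ``by symmetry'' jump to the global formula for $\psi$ encodes.
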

Before embarking with the proof, we make the following observations. Working in  Euclidean space $\mathbb R^m$ with canonical coordinates $(y_1,\dots,y_m)$, few cones have harmonic profile whose expression in coordinates are simple and explicit. 

In dimension $2$, $\{\mathbf y=(y_1,y_2): y_1\in \mathbb R, y_2>0\}$ has $h_\pi(\mathbf y)=y_2$ while the cone
$\{\mathbf y=(y_1,y_2): y_1>0, y_2>0\}$ has $h_{\pi/2}(\mathbf y)=y_1y_2$. For any $\eta\in (0,2\pi)$, the cone $$V_\eta=\{\mathbf y=re^{i\theta}: r>0, 0<\theta<\eta\}$$ of aperture $\eta$  has 
$$h_\eta(\mathbf y)=r^{\pi/\eta}\sin (\pi \theta/\eta) .$$
Even so there is no easy formula for $h$ in terms of $(y_1,y_2)$, it is helpful to observe that
$$h(\mathbf y)\asymp (y_1^2+y_2^2)^{(\pi/2\eta)-1 } d(\mathbf y,L_0)d(\mathbf y, L_{\eta})$$
where $L_\theta$ is the half-line $\{\mathbf z=re^{i\theta}: r>0\}$ and $d(\mathbf y,L)$ is the distance from $\mathbf y$ to $L$. In such an estimate, the terms of the form $d(\mathbf y,L)$ can be replaced by any equivalent quantity. 
For instance, for $\eta\in (0,3\pi/4]$, we can write
$$h_\eta(\mathbf y)\asymp (y_1+y_2)^{(\pi/\eta)-2 } (y_1\sin\theta-y_2\cos\theta)y_2.$$
In the $2$-dimensional cone 
$$\mathcal C=\left\{\mathbf x= (t \xi_1,t \xi_2, N(1-t)): t> 0, \xi_1,\xi_2>0, \xi_1+\xi_2=N\right\}$$
which has aperture $\pi/3$, with $\mathbf x=(x_1,x_2,x_3)$, $x_1+x_2+x_3=N$, this give
$$h_{\pi/3}(\mathbf x)\asymp (x_1+x_2)x_1x_2$$
where $\asymp$ is uniform in $N$. See Figure \ref{fig6}.

\begin{figure}
    \centering
\begin{tikzpicture}[tdplot_main_coords, scale=4]

\coordinate (O) at (0,0,0);
\coordinate (X) at (1.2,0,0);
\coordinate (Y) at (0,1.2,0);
\coordinate (Z) at (0,0,1.2);

\coordinate (extY) at (0, 2, -0.8);
\coordinate (extZ) at (2, 0, -0.8);

\draw[thick,->] (0,0,0) -- (3.8,0,0) node[anchor=north east]{$x_1$};
\draw[thick,->] (0,0,0) -- (0,2.4,0) node[anchor=north west]{$x_2$};
\draw[thick,->] (0,0,0) -- (0,0,1.5) node[anchor=south]{$x_3$};

\draw (X) -- (Y);
\draw (Y) -- (Z);
\draw (Z) -- (X);

\draw (Y) -- (extY);
\draw (Z) -- (extZ);

\fill[tdplot_main_coords, opacity=0.2] (X) -- (Y) -- (Z) -- cycle;

\fill[tdplot_main_coords, opacity=0.1] (X) -- (Y) -- (extY) -- (extZ) -- cycle;

{
\draw[opacity=0.6] (0,1.8,-0.6) -- (0.2, 1.8, -0.8);
\draw[opacity=0.6] (0, 1.6,-0.4) -- (0.4, 1.6, -0.8);
\draw[opacity=0.6] (0, 1.4,-0.2) -- (0.6, 1.4, -0.8);
\draw[opacity=0.6] (0,1.2,0) -- (0.8, 1.2, -0.8);
\draw[opacity=0.6] (0,1,0.2) -- (1, 1, -0.8);
\draw[opacity=0.6] (0,0.8,0.4) -- (1.2, 0.8, -0.8);
\draw[opacity=0.6] (0,0.6,0.6) -- (1.4, 0.6, -0.8);
\draw[opacity=0.6] (0,0.4,0.8) -- (1.6, 0.4, -0.8);
\draw[opacity=0.6] (0,0.2,1) -- (1.8, 0.2, -0.8);

\draw[opacity=0.6] (0,1,0.2) -- (1,0,0.2);
\draw[opacity=0.6] (0,0.8,0.4) -- (0.8,0,0.4);
\draw[opacity=0.6] (0,0.6,0.6) -- (0.6,0,0.6);
\draw[opacity=0.6] (0,0.4,0.8) -- (0.4,0,0.8);
\draw[opacity=0.6] (0,0.2,1) -- (0.2,0,1);
\draw[opacity=0.6] (0, 1.4 ,-0.2) -- (1.4, 0, -0.2);
\draw[opacity=0.6] (0, 1.6, -0.4) -- (1.6, 0, -0.4);
\draw[opacity=0.6] (0, 1.8, -0.6) -- (1.8, 0,-0.6);

\draw[opacity=0.6] (1.8, 0.2, -0.8) -- (1.8, 0, -0.6);
\draw[opacity=0.6] (1.6, 0.4, -0.8) -- (1.6, 0, -0.4);
\draw[opacity=0.6] (1.4, 0.6, -0.8) -- (1.4,0 ,-0.2);
\draw[opacity=0.6] (1.2, 0.8, -0.8) -- (1.2,0, 0);
\draw[opacity=0.6] (1,1,-0.8) -- (1,0,0.2);
\draw[opacity=0.6] (0.8,1.2,-0.8) -- (0.8,0,0.4);
\draw[opacity=0.6] (0.6, 1.4,-0.8) -- (0.6,0,0.6);
\draw[opacity=0.6] (0.4,1.6,-0.8) -- (0.4,0,0.8);
\draw[opacity=0.6] (0.2,1.8,-0.8) -- (0.2,0,1);
\draw[opacity=0.6] (0, 2, -0.8) -- (0, 0, 1.2);

}

\end{tikzpicture}

    \caption{The triangle (dark grey), cone and lattice for the 3-player game: the harmonic profile satisfies $h(\mathbf x)\asymp (x_1+x_2)x_1x_2$, $\mathbf x=(x_1,x_2,x_3)$, $\sum_1^3 x_i=N$. Here $N=6$.}
    \label{fig6}
\end{figure}
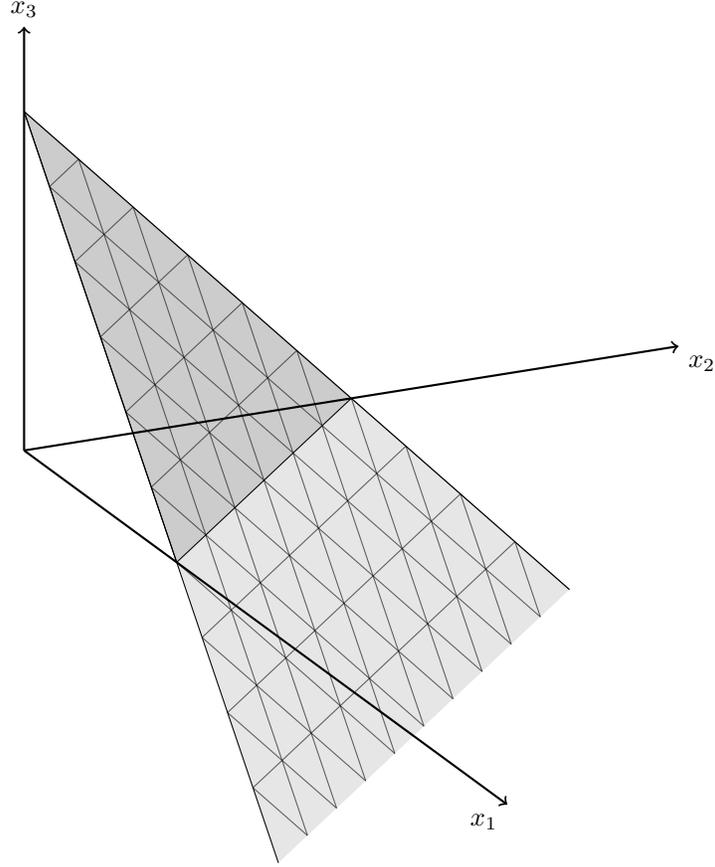

Such two dimensional computations apply to a wedge determined by two half plane meeting along their edge (a copy of $\mathbb R$) in three dimension
 after choosing coordinate wisely. Namely, one can represent such a wedge as  $W=\mathbb R\times V_\eta$ where $V_\eta$ lies in a plane orthogonal to the mentioned copy of $\mathbb R$.
 If we call $(z,y_1,y_2)$ the coordinate of $\mathbb R^3$
 corresponding to $\mathbb R\times V_\eta$, the profile of this wedge is  $h_W(z,y_1,y_2)=r^{\pi/\eta}\sin (\pi \theta/\eta)$ where $r,\theta$ are the polar coordinates of the point $(y_1,y_2)$. Hence, we have
 $$h_W(z,y_1,y_2)\asymp (y_1+y_2)^{(\pi/\eta)-2}(y_1\sin \theta-y_2\cos\theta)y_2.$$
 
 \begin{proof}[Proof of Proposition \ref{prop-h}]
 We now explain how to use this information to express $h$ in (\ref{h1}) purely in terms of $x_1,x_2,x_3$.  Consider a region in the cone $\mathcal C$ centered around one of the tips of the spherical triangle $\mathcal C\cap \mathbb S^2(\mathbf t)$, $\mathbf t=\mathbf t_4=(0,0,0,N)$, say
 $\mathbf q_1=(1,0,0,N-1))$.  In an Euclidean ball $A$ of radius $1/2$ around this point,
  $h$ is a positive harmonic function in a wedge $W$ with aperture equal to the angle between the normal vectors of the two planes $Q_2, Q_3$ meeting along the edge equal to the line $L_1$ passing through $\mathbf t$ and $\mathbf q_1$. The plane $Q_i$ is the plane determined by the three points $\mathbf t, \mathbf q_1, \mathbf q_i$,  parallel to the plane spanned by the vectors $\overrightarrow{\mathbf t\mathbf q_1},\overrightarrow{\mathbf t\mathbf q_i}$, with $\overrightarrow{\mathbf t\mathbf q_1}=(1,0,0,-1)$, $\overrightarrow{\mathbf t\mathbf q_2}=(0,1,0,-1)$, $\overrightarrow{\mathbf t\mathbf q_3}=(0,0,1,-1)$. It follows that the vectors
  $(1,1,-3,1)$ and $(1,-3,1,1)$ are, respectively, normal to $Q_2$ and $Q_3$ and contained in $\{\sum _1^4x_i=0\}$. The cosine of their angle $\eta\in [0,\pi)$ is
  $\cos \eta= 4/12=1/3$. Set 
  $$\beta=\frac{\pi}{\arccos (1/3)}\approx \frac{\pi}{1.23}\approx 2.55.$$
  For a point $\boldsymbol{\zeta}=(\zeta_1,\zeta_2,\zeta_3,\zeta_4)\in \mathcal C \cap A$, $\zeta_1\asymp 1,\zeta_4\asymp N$, $\zeta_2,\zeta_3\in (0,1/2)$,  and
  $$h_W(\boldsymbol \zeta) \asymp (\zeta_2+\zeta_3)^{\beta-2}\zeta_2\zeta_3.$$
  This gives that, in the ball $\frac{1}{2}A$,
  $$\frac{h(\mathbf x)}{h(\mathbf y)}\le C \frac{h_W(\mathbf x)}{h_W(\mathbf y)}.$$
  Picking $\mathbf y \asymp (1/\sqrt{3},1/\sqrt{3},1/\sqrt{3},N-3/\sqrt{3})$ gives
 $$h(\mathbf x) \le C \psi (\theta)
 (x_2+x_3)^{\beta-2}x_2x_3.$$ 
 exchanging the role of $\mathbf x$ and $\mathbf y$ gives a matching lower bound so that
 $$h(\mathbf x) \asymp C \psi (\theta)
 (x_2+x_3)^{\beta-2}x_2x_3.$$ 
 
  By symmetry, this gives, on $\mathcal C\cap \mathbb S^2(\mathbf t)$, $\mathbf x=r\theta$, $r=1$,
  $$\psi(\theta)\asymp (x_1+x_2)^{\beta-2} (x_1+x_3)^{\beta-2}(x_2+x_3)^{\beta-2}x_1x_2x_3.$$
 Finally, for any $\mathbf x\in \mathcal C$,
$$h(\mathbf x)\asymp (x_1+x_2+x_3)^{\alpha-3\beta +3}
  (x_1+x_2)^{\beta-2} (x_1+x_3)^{\beta-2}(x_2+x_3)^{\beta-2}x_1x_2x_3.$$
  \end{proof}
  
  \subsection{The function $\phi_0$ in coordinates}
With Proposition \ref{prop-h} at hands, it is an easy matter to estimate the Perron-Frobenius function $\phi_0$ of the simplex $\mathfrak S_n$ in the four-player case ($k=4$).
For this purpose we define the symmetric functions in the variables $x_1,x_2,x_3,x_4$:
\begin{eqnarray*}
\tau_1&=&x_1x_2x_3x_4,\\ \tau_2&=&(x_1+x_2)(x_1+x_3)(x_1+x_4)(x_2+x_3)(x_2+x_4)(x_3+x_4),\\
\tau_3&=&(x_1+x_2+x_3)(x_1+x_2+x_4)(x_1+x_3+x_4)(x_2+x_3+x_4).
\end{eqnarray*}

\begin{thm}\label{thm-tau} In the $4$-player case, the Perron-Frobenius function $\phi_0$ of the simplex $\mathfrak S_N$ satisfies
\begin{equation}\label{tau}\forall\,\mathbf x\in \mathfrak S_N,\;\;\;\phi_0(\mathbf x)\asymp N^{-((3/2)+4\alpha-6\beta+4)}
\tau_3^{\alpha-3\beta+3}\tau_2^{\beta-2}\tau_1 \end{equation}
uniformly in $N$.  If $\mathbf x=(x_1,\dots,x_4$ satisfies $x_4\ge N/4 $, this simplifies to
$$\phi_0(\mathbf x)\asymp N^{-((3/2)+\alpha)}(x_1+x_2+x_3)^{\alpha-3\beta +3}
  (x_1+x_2)^{\beta-2} (x_1+x_3)^{\beta-2}(x_2+x_3)^{\beta-2}x_1x_2x_3.
$$
\end{thm}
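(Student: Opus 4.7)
The plan is to proceed in two stages: first establish the simplified estimate on the corner region $\{x_4\ge N/4\}$ by directly combining the two propositions already proven, then bootstrap to the full symmetric formula on all of $\mathfrak S_N$ using the permutation symmetry of $\phi_0$ together with the fact that the four corner regions $\{x_i\ge N/4\}$ cover the simplex.

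For the first stage, I would simply plug in. Applying Proposition \ref{prop-hphi} with $k=4$ gives, on $\mathfrak S_N\cap\{x_4\ge N/4\}$,
\[
\phi_0(\mathbf x)\asymp N^{-(3/2+\alpha)}\,h(\mathbf x),
\]
and then Proposition \ref{prop-h} substitutes the explicit expression for $h(\mathbf x)$ in terms of $x_1,x_2,x_3$, yielding the second (simplified) formula in the theorem statement verbatim.

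The second stage is the arithmetic of exponents. On $\{x_4\ge N/4\}$ we have $N/4\le x_4\le N$, so $x_4\asymp N$. Any factor in $\tau_1,\tau_2,\tau_3$ that involves $x_4$ is therefore $\asymp N$, while factors not involving $x_4$ are exactly those appearing in the simplified formula. Counting carefully, one gets $\tau_1\asymp N\cdot x_1x_2x_3$, $\tau_2\asymp N^3(x_1+x_2)(x_1+x_3)(x_2+x_3)$, and $\tau_3\asymp N^3(x_1+x_2+x_3)$. Substituting these asymptotics into the right-hand side of (\ref{tau}) contributes an $N$-power of
\[
-\tfrac{3}{2}-4\alpha+6\beta-4+3(\alpha-3\beta+3)+3(\beta-2)+1=-\tfrac{3}{2}-\alpha,
\]
which matches the simplified formula exactly; thus the symmetric expression in (\ref{tau}) is equivalent to the simplified one on $\{x_4\ge N/4\}$, so (\ref{tau}) holds on this corner region.

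Finally, the Markov kernel $\widetilde{K}$ (and hence $K$) is invariant under any permutation of the four coordinates, so by uniqueness of the Perron--Frobenius eigenfunction (\ref{defphi0}), $\phi_0$ is itself symmetric in $(x_1,x_2,x_3,x_4)$. The functions $\tau_1,\tau_2,\tau_3$ are also symmetric, so the estimate (\ref{tau}) already proven on $\{x_4\ge N/4\}$ transfers verbatim, by applying a coordinate permutation, to each of the corner regions $\{x_i\ge N/4\}$, $i=1,2,3$. Since $\sum_1^4 x_i=N$ forces $\max_i x_i\ge N/4$ for every $\mathbf x\in\mathfrak S_N$, the four corner regions cover $\mathfrak S_N$, and (\ref{tau}) holds uniformly on all of $\mathfrak S_N$. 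The only step with any content is the $N$-exponent bookkeeping in the middle paragraph; it is routine but must be performed carefully, and everything else is essentially a consequence of the results already assembled.
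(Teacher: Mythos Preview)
Your proof is correct and follows essentially the same approach as the paper: establish the corner estimate on $\{x_4\ge N/4\}$ by combining Propositions~\ref{prop-hphi} and~\ref{prop-h}, verify via the $N$-exponent bookkeeping that the symmetric expression~(\ref{tau}) reduces to this on that corner, and then invoke the permutation symmetry of $\phi_0$ together with the fact that the four corner regions cover $\mathfrak S_N$. The paper additionally performs a separate sanity check on the central region where all $x_i\asymp N$, but as you implicitly observe this region lies inside every corner and the check is redundant once the corner case is done.
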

        \begin{proof} We simply need to check that (\ref{tau}) is compatible with Propositions \ref{prop-h} and  \ref{prop-hphi}. Let start with the central part of $\mathfrak S_N$ where each 
        $x_i$ satisfies $x_i\asymp N$.  By the basic Harnack inequality and the normalization $\|\phi_0\|_2^2=1$, we know that $\phi_0(\mathbf x)\asymp N^{-3/2}$ there.  In that region, the expression
$$T=\tau_3^{\alpha-3\beta+3}\tau_2^{\beta-2}\tau_1$$ is of order $$N^{4(\alpha-3\beta+3)+6(\beta-2)+4}=N^{4\alpha-6\beta+4}.$$
This shows that  (\ref{tau}) amounts to $\phi_0(\mathbf x)\asymp N^{-3/2}$ in this middle part, as desired.  Then we focus  on each of the four corners of $\mathfrak S_N$. By symmetry, it suffices to consider one of this corner, say, the corner where $x_4\ge N/4$. In that corner, the expression
$T$
satisfies
$$T\asymp N^{3\alpha-6\beta+4} (x_1+x_2+x_3)^{\alpha-3\beta +3}
  (x_1+x_2)^{\beta-2} (x_1+x_3)^{\beta-2}(x_2+x_3)^{\beta-2}x_1x_2x_3.$$
Hence, the right-hand side of (\ref{tau}) becomes
$$N^{-((3/2)+\alpha) }(x_1+x_2+x_3)^{\alpha-3\beta +3}
  (x_1+x_2)^{\beta-2} (x_1+x_3)^{\beta-2}(x_2+x_3)^{\beta-2}x_1x_2x_3.$$
This is indeed what Propositions \ref{prop-h} and  \ref{prop-hphi} entail. It also gives the announced approximation for $\phi_0$ in the corner $x_4\ge N/4$.
\end{proof}
\subsection{The approximate computation of $\alpha$} 
As explained above (Definition \ref{def-alpha}), computing $\alpha=\alpha_4$ is equivalent to computing
the Dirichlet eigenvalue $\lambda_4$ at (\ref{lambda}) for the equilateral spherical triangle obtained on the unit sphere $\mathbb S^2$ by drawing a unit simplex in $\mathbb R^3$ with one vertex at the origin of $\mathbb R^3$. The exact value for this eigenvalue is not known. Grady Wright has constructed a numerical algorithm that approximates $\lambda$ by computing the eigenvalue of a finite matrix corresponding to a radial basis function (RBF), finite difference approximation using a carefully selected grid \cite{SWKF,GW}. One of the difficulties is associated with dealing with corners of the spherical triangle, and the grid is selected to be more clustered near these corners.

When discretizing the continuous problem to the matrix eigenvalue problem, the "symmetry" of the original problem is lost and one is led to the computation of the spectrum (for us, just the lowest eigenvalue) of a non-symmetric matrix. In general, such computations are known to be difficult as the spectrum of the non-symmetric matrix can be more sensitive to perturbations than the underlying continuous problem. This is one of the reasons for which, although one should be confident that the value given by the algorithm is a good approximation, there is no proof of it and no error estimate. One way to ``check'' the algorithm is by testing its result on the equilateral spherical triangle associated with the first quadrant, which has eigenvalue 12. For the triangle associated with the tetrahedron, The algorithm gives 11.99. Grady Wright's algorithm gives $\lambda_4\approx 38.447$ and produces Figure \ref{fig5}.

\begin{figure}[h]
  \includegraphics[width=0.5\linewidth]{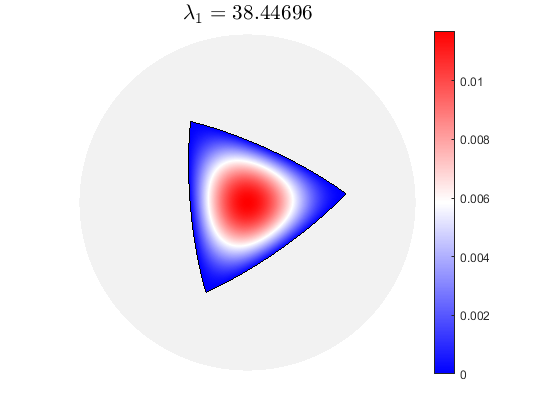}
  \caption{The eigenvalue $\lambda_4$ (for the 4-player game; called $\lambda_1$ in the figure) and the associated eigenfunction}
  \label{fig5}
\end{figure}

\section{Applications} \label{sec-Appli}
In this section, we describe various explicit estimates obtained by applying the knowledge of $\phi_0$. Recall that $\alpha\approx 5.68$ and $\beta=(\pi/\arccos{1/3})\approx 2.55$.
\subsection{General estimate}
Putting together Theorem \ref{th2} and formula (\ref{tau}) yields the following general estimate.
In order to simplify formula, we use the natural symmetries to focus on three cases depending on the positions of the starting point $\mathbf s\in \mathfrak S_N$ and the target point $\mathbf z\in \partial \mathfrak S_N$.
Define three cases as follows:
\begin{itemize}
    \item [Case 1] Assume $s_4\ge N/4$, $z_4=0$, and $z_3\ge N/3$  (one of the above average players ends up loosing). In this case the distance between $\mathbf s$ and $\mathbf z$ is of order $N$.
    \item [Case 2] Assume $s_4\ge N/4>\max\{s_1,s_2,s_3\}$, and $z_3=0$, $z_2\ge N/3$, and $d(\mathbf s,\mathbf z)\asymp N$. 
    \item [Case 3] Assume $s_4\ge N/4>\max\{s_1,s_2,s_3\}$, and $z_3=0$, $z_4\ge N/3$ (one of the below average players ends up loosing while one of the above average players remains above average). The distance between $\mathbf s$ and $\mathbf z$ may be small.
\end{itemize}
Note that there is some overlap between cases 2 and 3. 

\begin{thm}\label{thm-first}In the $4$-player game,  
consider points $\mathbf s\in \mathfrak S_N$ and $\mathbf z\in \partial \mathfrak S_N$ with $s_4\ge N/4$. 
Assume Case 1 or Case 2 above. In Case 1, set $z_*=z_2$ and, in Case 2, set $z_*=z_4$. Then we have
\begin{eqnarray*}
\lefteqn{ \mathbf P(\mathbf  X_\tau =\mathbf z  | \mathbf X_0=\mathbf s)\asymp }&&\\
&&N^{-1-2\alpha}(s_1+s_2+s_3)^{\alpha-3\beta+3}[(s_1+s_2)(s_1+s_3)(s_2+s_3)]^{\beta-2}s_1s_2s_3\\
&& \times (z_1+z_*)^{\alpha-2\beta+1}(z_1z_*)^{\beta-1}.\end{eqnarray*}
In particular, 
\begin{eqnarray*}
\lefteqn{\mathbf P(\mathbf  X_\tau \in \mathfrak T_{N,4} | \mathbf X_0=\mathbf s)}&&\\
&&\asymp N^{-\alpha}
(s_1+s_2+s_3)^{\alpha-3\beta+3}[(s_1+s_2)(s_1+s_3)(s_2+s_3)]^{\beta-2}s_1s_2s_3,\end{eqnarray*}
and even
\begin{eqnarray*}
\lefteqn{\mathbf P(\mathbf  X_\tau \in\{\mathbf z: z_4\le N/5\} | \mathbf X_0=\mathbf s)}&&\\
&&\asymp N^{-\alpha}
(s_1+s_2+s_3)^{\alpha-3\beta+3}[(s_1+s_2)(s_1+s_3)(s_2+s_3)]^{\beta-2}s_1s_2s_3.\end{eqnarray*}
\end{thm}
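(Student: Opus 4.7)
The strategy is to combine Theorem~\ref{th2} with the explicit estimate for $\phi_0$ from Theorem~\ref{thm-tau}. Both the pointwise formulas (Cases~1 and~2) and the aggregate formulas have $d(\mathbf{s},\mathbf{z})\asymp N$: this is assumed outright in Cases~1 and~2, and for the sums it follows from $s_4\ge N/4$ together with $z_4=0$ (on $\mathfrak{T}_{N,4}$) or $z_4\le N/5$. Hence the simplified estimate (\ref{P1}) applies throughout, and everything reduces to evaluating and summing $N^2\phi_0(\mathbf{s})\phi_0(\mathbf{y}_{\mathbf{z}})$.

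For the pointwise formula of Cases~1 and~2, the corner form of Theorem~\ref{thm-tau} gives immediately $\phi_0(\mathbf{s})\asymp N^{-(3/2+\alpha)}C(\mathbf{s})$ where $C(\mathbf{s})=(s_1+s_2+s_3)^{\alpha-3\beta+3}[(s_1+s_2)(s_1+s_3)(s_2+s_3)]^{\beta-2}s_1s_2s_3$. For $\phi_0(\mathbf{y}_{\mathbf{z}})$ I use the symmetric expression (\ref{tau}). The large coordinate of $\mathbf{y}_{\mathbf{z}}$ (namely $z_3\asymp N$ in Case~1 and $z_2\asymp N$ in Case~2) forces every pairwise and triple sum containing it to be $\asymp N$, so the symmetric functions $\tau_1,\tau_2,\tau_3$ factor cleanly as a power of $N$ times an elementary expression in the two remaining ``small'' free coordinates $z_1$ and $z_*$. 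Using the identities $(\alpha-3\beta+3)+(\beta-2)=\alpha-2\beta+1$ and $(\beta-2)+1=\beta-1$, one obtains $\phi_0(\mathbf{y}_{\mathbf{z}})\asymp N^{-(3/2+\alpha)}(z_1+z_*)^{\alpha-2\beta+1}(z_1z_*)^{\beta-1}$, and multiplication by $N^2\phi_0(\mathbf{s})$ yields the claimed prefactor $N^{-1-2\alpha}$.

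For the hitting probability of $\mathfrak{T}_{N,4}$, rather than partition the face by the location of the dominant coordinate, I apply Theorem~\ref{thm-tau} to a generic neighbor $\mathbf{y}_{\mathbf{z}}\approx(z_1,z_2,z_3,1)$ of $\mathbf{z}=(z_1,z_2,z_3,0)$ and exploit $z_1+z_2+z_3=N$ to simplify $\tau_3$, obtaining the fully symmetric expression $\phi_0(\mathbf{y}_{\mathbf{z}})\asymp N^{-5/2-3\alpha+3\beta}[(z_1+z_2)(z_1+z_3)(z_2+z_3)]^{\alpha-2\beta+1}(z_1z_2z_3)^{\beta-1}$. Summing $N^2\phi_0(\mathbf{s})\phi_0(\mathbf{y}_{\mathbf{z}})$ over the two-dimensional lattice simplex and rescaling $z_i=Nu_i$ turns the sum into $N^2$ times a bounded continuum integral on the continuous $2$-simplex, contributing an additional total factor $N^{3(\alpha-2\beta+1)+3(\beta-1)+2}=N^{3\alpha-3\beta+2}$. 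Combining all the $N$-powers gives the announced $N^{-\alpha}C(\mathbf{s})$.

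Finally, the estimate for $\mathbf{P}(\mathbf{X}_\tau\in\{z_4\le N/5\}\mid\mathbf{X}_0=\mathbf{s})$ follows from the previous step by the inclusion $\mathfrak{T}_{N,4}\subset\{z_4\le N/5\}$ (lower bound) together with an upper bound obtained by running the same bookkeeping on each face $\mathfrak{T}_{N,i}\cap\{z_4\le N/5\}$ for $i\in\{1,2,3\}$; on such a face $z_i=0$ and $z_4\le N/5$ force one of the remaining coordinates to be $\ge 2N/5$, and the corresponding sum contributes at most a term of order $N^{-\alpha}C(\mathbf{s})$, so adding the four faces preserves the asymptotic order. The main technical obstacle is tracking how the identity of the dominant coordinate changes across the face and verifying that the limiting integrals converge uniformly in $N$, which is controlled by the numerical inequalities $\beta-1>0$ ($\approx 1.55$) and $\alpha-2\beta+1>0$ ($\approx 1.58$) preventing any boundary divergence of the continuum integrand.
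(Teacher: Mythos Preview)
Your proof is correct and follows essentially the same approach as the paper: apply (\ref{P1}) since $d(\mathbf s,\mathbf z)\asymp N$, evaluate $\phi_0(\mathbf s)$ and $\phi_0(\mathbf y_{\mathbf z})$ via Theorem~\ref{thm-tau}, and sum. Your organization of the $\mathfrak T_{N,4}$ sum through the fully symmetric expression for $\phi_0(\mathbf y_{\mathbf z})$ and a continuum rescaling is a minor presentational variant of the paper's terse ``summing over the free variables $z_1,z_*$'', and your explicit face-by-face treatment of $\{z_4\le N/5\}$ supplies detail the paper leaves implicit.
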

\begin{proof} In cases 1 and $2$ where we are sure that $d(\mathbf s,\mathbf z)\asymp N$, Theorem \ref{th2} and (\ref{P1}) give
$$ \mathbf P(\mathbf  X_\tau =\mathbf z  | \mathbf X_0=\mathbf s)\asymp N^2
\phi_0(\mathbf s)\phi_0(\mathbf y_\mathbf z).
$$
where $\mathbf y_\mathbf z$ is an interior neighbor of the boundary point $\mathbf z$. 
Theorem \ref{thm-tau} gives
$$\phi_0(\mathbf s)\asymp N^{-((3/2)+\alpha)}(s_1+s_2+s_3)^{\alpha-3\beta+3}[(s_1+s_2)(s_1+s_3)(s_2+s_3)]^{\beta-2}s_1s_2s_3.$$

Similarly, the point $\mathbf y_z$ has third coordinate greater than $(N/3)-1$ in case 1 and second coordinate greater than $(N/3)-1$ in case 2 which yields 
$$\phi_0(\mathbf z)\asymp N^{-(3/2)+\alpha} (z_1+z_*)^{\alpha -2\beta+1}(z_1z_*)^{\beta-1}  .$$
This gives the desired result for $ \mathbf P(\mathbf  X_\tau =\mathbf z  | \mathbf X_0=\mathbf s)$. Summing over the free variables $z_1,z_*$ gives the other statements.

\end{proof}

\begin{exa}[Players distributed on a power scale] As an illustration, assume that
$$s_4\asymp N, \mbox{ and } \;s_i\asymp N^{\epsilon_i}, i\in \{1,2,3\}
\mbox{ with } 0\le \epsilon_1\le \epsilon_2\le \epsilon_3<1.$$
What is the (order of magnitude of) probability that the fourth player, which is currently the dominant player because $\epsilon_i\in [0,1]$, $i\in \{1,2,3\}$, ends up loosing first?
The answer is
$$\mathbf P(\mathbf  X_\tau \in \mathfrak T_{N,4} | \mathbf X_0=\mathbf s)
\asymp N^{-\alpha(1-\epsilon_3)-\beta (\epsilon_3-\epsilon_2) -(\epsilon_2-\epsilon_1)}.$$
When $\epsilon_1=\epsilon_2=\epsilon_3$, 
$$\mathbf P(\mathbf  X_\tau \in \mathfrak T_{N,4} | \mathbf X_0=\mathbf s)
\asymp N^{-\alpha(1-\epsilon_3)}.$$ When $\epsilon_2=\epsilon_1=\epsilon_3/2$,
$$\mathbf P(\mathbf  X_\tau \in \mathfrak T_{N,4} | \mathbf X_0=\mathbf s)
\asymp N^{-\alpha (1-\epsilon_3)-\epsilon_3\beta /2 }.$$
\end{exa}

\begin{thm} \label{thm-second}In the $4$-player game,
consider points $\mathbf s\in \mathfrak S_N$ and $\mathbf z\in \partial \mathfrak S_N$ with $s_4\ge N/4. $
Assume Case 3 above with $d=d(\mathbf s,\mathbf z)$, that is $z_3=0,z_4\ge N/3$. Then we have
\begin{eqnarray*}
\lefteqn{\mathbf P(\mathbf  X_\tau =\mathbf z  | \mathbf X_0=\mathbf s) \asymp \frac{(s_1+s_2+s_3)^{\alpha-3\beta+3}}{d(s_1+s_2+s_3+d)^{2(\alpha-3\beta+3)}}}\hspace{1in}&&\\
&&\times \frac{[(s_1+s_2)(s_1+s_3)(s_2+s_3)]^{\beta-2}}{[(s_1+s_2+d)(s_1+s_3+d)(s_2+s_3+d)]^{2(\beta-2)}}\\
&&\times \frac{s_1s_2s_3}{[(s_1+d)(s_2+d)(s_3+d)]^2}\\
&&\times   (z_1+z_2)^{\alpha-2\beta+1}(z_1z_2)^{\beta-1}.\end{eqnarray*}
\end{thm}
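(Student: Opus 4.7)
The plan is to apply Theorem \ref{th2} in the form
\begin{equation*}
\mathbf{P}(\mathbf{X}_\tau=\mathbf{z}\mid \mathbf{X}_0=\mathbf{s})\asymp\frac{\phi_0(\mathbf{s})\,\phi_0(\mathbf{y}_{\mathbf{z}})}{\phi_0(\mathbf{s}_d)^{2}\, d},
\end{equation*}
which is exactly (\ref{P2}) specialised to $k=4$ (so $d^{k-3}=d$). In Case 3, when $d\ll N$ this is already the dominant term in Theorem \ref{th2}, and when $d\asymp N$ the two terms of Theorem \ref{th2} are comparable, so the same formula holds up to constants and continuously matches the Case 1/2 answer. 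The remainder of the work is then purely substitutional: express each of $\phi_0(\mathbf{s})$, $\phi_0(\mathbf{y}_{\mathbf{z}})$ and $\phi_0(\mathbf{s}_d)$ in coordinates via Theorem \ref{thm-tau}.

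For $\phi_0(\mathbf{s})$ the $x_4\ge N/4$ formula of Theorem \ref{thm-tau} applies directly and reproduces, up to the prefactor $N^{-(3/2+\alpha)}$, the four $\mathbf{s}$-dependent numerator factors appearing in the statement. For $\phi_0(\mathbf{y}_{\mathbf{z}})$ I would use that $z_3=0$ forces any interior neighbour of $\mathbf{z}$ to have $(y_z)_3=1$ and $(y_z)_j\asymp z_j$ for $j=1,2$, with both $z_1,z_2\ge 1$; substituting into the corner formula telescopes the bracket $[(z_1+z_2)(z_1+1)(z_2+1)]^{\beta-2}z_1z_2$ into $(z_1+z_2)^{\beta-2}(z_1z_2)^{\beta-1}$, and then combining with the $(z_1+z_2)^{\alpha-3\beta+3}$ factor produces the $\mathbf{z}$-factor $N^{-(3/2+\alpha)}(z_1+z_2)^{\alpha-2\beta+1}(z_1z_2)^{\beta-1}$ that appears in the statement.

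For $\phi_0(\mathbf{s}_d)$ I would choose a lattice point with $(s_d)_i\asymp s_i+d$ for $i\in\{1,2,3\}$ and $(s_d)_4\asymp N$. This is possible because the defining conditions $d(\mathbf{s},\mathbf{s}_d)\le C_k d$ and $d(\mathbf{s}_d,\partial\mathfrak{S}_N)\ge c_k d$ pin each coordinate to the interval $[\max(c_k d,\, s_i-C'd),\, s_i+C'd]$, which is comparable to $s_i+d$. Substituting into Theorem \ref{thm-tau} yields
\begin{equation*}
\phi_0(\mathbf{s}_d)^{2}\asymp N^{-(3+2\alpha)}(s_1+s_2+s_3+d)^{2(\alpha-3\beta+3)}\prod_{1\le i<j\le 3}(s_i+s_j+d)^{2(\beta-2)}\prod_{i=1}^{3}(s_i+d)^{2}.
\end{equation*}
Dividing through, the $N^{-(3/2+\alpha)}$ coming from each of $\phi_0(\mathbf{s})$ and $\phi_0(\mathbf{y}_{\mathbf{z}})$ exactly cancels the $N^{3+2\alpha}$ produced by $\phi_0(\mathbf{s}_d)^{-2}$, and the remaining polynomial expression is the one asserted.

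The main potential obstacle is the bookkeeping around the auxiliary point $\mathbf{s}_d$: one needs to verify both the existence of a lattice point with the described coordinate profile and the robustness of the formula to the particular choice among several such candidates, which is underwritten by the Harnack estimate of Theorem \ref{thH} applied to $\phi_0$ on a ball around $\mathbf{s}_d$. A secondary sanity check is compatibility with Theorem \ref{thm-first} in the regime $d\asymp N$: there $s_i+d\asymp N$ so the denominator factors collapse to pure powers of $N$ and the expression must reduce to the Case 1/2 answer; this reduction is a routine exponent-count and serves as a natural consistency test for the formula.
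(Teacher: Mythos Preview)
Your proposal is correct and follows exactly the route the paper intends. The paper does not spell out a proof of Theorem \ref{thm-second}; it is meant to follow the pattern of the proof of Theorem \ref{thm-first}, replacing (\ref{P1}) by (\ref{P2}) (equivalently, keeping the full two-term estimate of Theorem \ref{th2} and noting, as you do, that the $1/(\phi_0(\mathbf s_d)^2 d)$ term always dominates since $\phi_0(\mathbf s_d)\le \|\phi_0\|_\infty\asymp N^{-3/2}$), and then evaluating $\phi_0(\mathbf s)$, $\phi_0(\mathbf y_{\mathbf z})$, $\phi_0(\mathbf s_d)$ via the corner formula of Theorem \ref{thm-tau}. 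Your choice $(s_d)_i\asymp s_i+d$ for $i=1,2,3$, $(s_d)_4\asymp N$, the simplification of the $\mathbf z$-factor, and the cancellation of the $N^{-(3/2+\alpha)}$ prefactors are all exactly what is needed.
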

\begin{exa}[Most likely outcome in the very dominant player case] 
To illustrate this result, assume $\mathbf s=(1,1,1,N-3)$ and $\mathbf z=(z_1,z_2,0,z_4)$ with $z_4\ge N/3$. In this situation, the result simplify to
\begin{eqnarray*}\mathbf P(\mathbf  X_\tau =\mathbf z  | \mathbf X_0=\mathbf s) &\asymp&
\frac{(z_1+z_2)^{\alpha-2\beta+1}(z_1z_2)^{\beta-1}}{d^{1+2\alpha}}\\
&\asymp & \frac{1}{d^{1+\alpha}}\left(\frac{z_1+z_2}{d}\right)^\alpha\left(\frac{z_1z_2}{(z_1+z_2)^2}\right)^\beta \left(\frac{z_1+z_2}{z_1z_2}\right)
\end{eqnarray*}
In the last expression, every factor is bounded above and this show that most of the mass is obtained when $d\asymp 1$ which forces $z_1\asymp z_2\asymp 1$
 and $|z_4-N|\asymp 1.$
\end{exa}

\begin{exa}[Probability that the second dominant player loose first] We saw that the probability that a very dominant player (say, $s_4=N-3$) looses first is of order $N^{-\alpha}$. If there is a dominant player, i.e., $s_4\sim N$, and a subdominant player, i.e., $s_3\asymp N^\epsilon$, $\epsilon\in (0,1)$, while $s_1\asymp s_2\asymp 1$, the probability that the dominant player looses first is of order $$N^{-\alpha (1-\epsilon)-\beta\epsilon}=N^{-\alpha+(\alpha-\beta)\epsilon}.$$
Theorems \ref{thm-first} and  \ref{thm-second} allows us to estimate the probability that the subdominant player ends up loosing first in this situation. For this, we assume that $s_3\asymp N^{\epsilon}$
with $\epsilon\in (0,1)$, and $s_1,s_2\asymp 1$. Of course, this implies that $s_4\sim N$.  We want to compute the probability that $\mathbf X_\tau\in \mathfrak T_{N,3}=\{\mathbf z\in \partial \mathfrak S_N \cap \{z_3=0\}\}$. 

Given $\mathbf z\in \partial \mathfrak S_N$ with $z_3=0$ and $z_1+z_2\ge 2N/3$, hence $d(\mathbf s,\mathbf z)\asymp N$ and $z_4\le N/3$, we use Theorem \ref{thm-first} to see that
$$\mathbf P(\mathbf  X_\tau =\mathbf z  | \mathbf X_0=\mathbf s) \asymp
N^{-\alpha+\epsilon(\alpha-\beta)-\beta-1}\min\{z_1,z_2\}^{\beta-1}.$$
The contribution of this to $\mathbf P(\mathbf  X_\tau \in \mathfrak T_{N,3}  | \mathbf X_0=\mathbf s)$ is of order $N^{-\alpha-\epsilon(\alpha-\beta)}$. 

To estimate the contribution of those $\mathbf z\in \partial \mathfrak S_N$ with $z_3=0$ and $z_4\ge N/3$ 
we use Theorem. \ref{thm-second} to find
$$\mathbf P(\mathbf  X_\tau =\mathbf z  | \mathbf X_0=\mathbf s) \asymp 
\frac{N^{\epsilon(\alpha-\beta)} (z_1+z_2)^{\alpha-2\beta+1}(z_1z_2)^{\beta-1}}{d^{1+2\beta }(N^\epsilon+d)^{2(\alpha-\beta)}}
.$$ To sum the formula above over all $(z_1,z_2,0,z_4)$ in $\mathfrak T_{N,3}$ with $z_4\ge N/3$, we consider two cases. When  $\max\{z_1,z_2\}\le N^\epsilon$, 
which implies $N-z_4\le 2 N^\epsilon$ we have  $d\asymp N^\epsilon$. Otherwise, 
$\max\{z_1,z_2\}\ge N^\epsilon$, and  $d\asymp ( z_1+z_2)\ge N^\epsilon$. Call the corresponding regions $U_1$ and $U_2$, respectively. In the first case
$$\sum_{U_1}\mathbf P(\mathbf  X_\tau =\mathbf z  | \mathbf X_0=\mathbf s) \asymp 
N^{-\epsilon\beta}
.$$ In the second case (using symmetry, we can assume $d\asymp z_2\ge z_1$)
\begin{eqnarray*}\sum_{U_1}\mathbf P(\mathbf  X_\tau =\mathbf z  | \mathbf X_0=\mathbf s) & \asymp & 
N^{\epsilon(\alpha-\beta)}\sum_{ N^\epsilon\le d\le  N} \left(d^{-1-\alpha-\beta} \sum_1^dz_1^{\beta-1}\right)\\
&\asymp & N^{\epsilon(\alpha-\beta)}
 \sum_{ N^\epsilon\le d\le  N} d^{-1-\alpha}\asymp N^{-\epsilon \beta}
\end{eqnarray*}
It follows that, for $\mathbf s=(1,1,s_3,N-s_3)$ and $s_3\asymp N^\epsilon$, we have
$$\mathbf P(\mathbf  X_\tau \in \mathfrak T_{N,3}  | \mathbf X_0=\mathbf s)\asymp N^{-\epsilon \beta}.$$

It is useful to compare this with the following heuristic: If player $4$ has $x_4\sim N$ chips, player $3$ $x_3\sim N^\epsilon$
chips and players $2$ and $1$ have just one chip each, to estimate the probability that player $3$ looses, we can ignore player $4$ and imagine  we are watching a game with $3$ players, $N^\epsilon$ chips, and with player $2$ and $1$ having only one chip each. From the $3$-player game results, the probability that
 the very dominant player $3$ looses such a game is of order $N^{-3\epsilon}$. Hence, this heuristic is off since the correct order of magnitude is $N^{-\epsilon \beta}$ and $\beta\approx 2.55$
\end{exa}

\bibliographystyle{plain}

\end{document}